\documentclass[11pt,a4paper]{article}
\usepackage[T1]{fontenc}
\usepackage{lmodern}
\usepackage{amsmath,amssymb,amsthm,mathtools}
\usepackage[margin=27mm]{geometry}
\usepackage{microtype,booktabs,longtable,array,enumitem,seqsplit}
\usepackage{needspace}
\usepackage{xcolor}
\usepackage[colorlinks=true,linkcolor=blue!45!black,citecolor=blue!45!black,urlcolor=blue!45!black]{hyperref}
\hypersetup{pdftitle={Ehrhart Series of the Birkhoff Polytopes via Constant Terms and Finite-Field Evaluation}}
\newtheorem{theorem}{Theorem}[section]
\newtheorem{proposition}[theorem]{Proposition}
\newtheorem{lemma}[theorem]{Lemma}
\newtheorem{corollary}[theorem]{Corollary}
\theoremstyle{definition}

\theoremstyle{remark}
\newtheorem{remark}[theorem]{Remark}
\newcommand{\F}{\mathbb F}
\newcommand{\Z}{\mathbb Z}
\newcommand{\R}{\mathbb R}
\newcommand{\N}{\mathbb Z_{\geq0}}
\newcommand{\B}{\mathcal B}
\newcommand{\Ehr}{\operatorname{Ehr}}
\newcommand{\Vol}{\operatorname{Vol}_{\Z}}

\DeclareMathOperator*{\CT}{CT}
\newcommand{\supp}{\operatorname{supp}}
\newcommand{\mult}{\operatorname{mult}}
\newcommand{\calM}{\mathcal M}
\newcommand{\calD}{\mathcal D}

\setlist[enumerate]{itemsep=3pt,topsep=5pt}
\setlist[itemize]{itemsep=3pt,topsep=5pt}
\allowdisplaybreaks[2]
\title
{Exact Ehrhart Series of Birkhoff Polytopes \\ via Constant Terms and Finite-Field Evaluation}
\author{Xinru Jiang $^a$, Guoce Xin $^{b,*}$, Chen Zhang $^c$, and Yueming Zhong $^a$
\\[2mm]
{\small $^a$ School of Mathematics and Statistics, Hainan University,}\\[-0.8ex]
{\small Haikou 570228, PR China}\\
{\small $^b$ School of Mathematical Sciences, Capital Normal University, }\\[-0.8ex]
{\small Beijing, 100048, PR China}\\
{\small $^c$ Center for Combinatorics, LPMC, Nankai University,}\\[-0.8ex]
{\small Tianjin 300071, PR China}\\
{\small Email addresses: chiang\_11110@163.com (X. Jiang)}\\[-0.8ex]
{\small \hspace{5.5em} guoce\_xin@163.com (G. Xin)}\\[-0.8ex]
{\small \hspace{5em} ch\_enz@163.com (C. Zhang)}\\[-0.8ex]
{\small \hspace{10.5em} zhongyueming107@gmail.com (Y. Zhong)}\\[1.2ex]
$^*$ Corresponding author.
}

\date{\today}

\begin{document}
\maketitle

\begin{abstract}

The Ehrhart series of the $n$th Birkhoff polytope is $\sum_{r\geq0}H_n(r)z^r$, where $H_n(r)$ counts nonnegative integer $n\times n$ matrices whose row and column sums all equal $r$. We present an exact method for computing this series using constant terms and finite fields. A root filter expresses $H_n(r)$ as a weighted sum of the values $h_r(M)^n$, where $h_r$ is the complete homogeneous symmetric polynomial and $M$ ranges over multisets of $m$th roots of unity with $m=r+1$. Constant-term cancellation reduces the evaluation of $h_r(M)$ to a sum over repeated elements $a$ of $M$. For a particular $a$ of multiplicity $\mu_a$, the computation uses a generalized Todd coefficient of degree $\mu_a-2$. Sums of $h_r(M)^n$ over selected multiplicity classes are handled using symmetric function techniques. Together with the remaining individual evaluations, this gives $O_n(m^{n-5}+m^4)$ field operations for each admissible prime and fixed $n\geq5$. An explicit bound and the Chinese remainder theorem recover the integer counts, and Ehrhart symmetry determines the full series. The same method applies to the World Cup problem, which counts the same matrices with diagonal entries required to be $0$. We prove correctness and compute complete series for both families through order $12$. The Birkhoff series for orders $10$--$12$ and the World Cup series for orders $9$--$12$ are tabulated in the appendices.

\end{abstract}
\noindent\textbf{Keywords.} Birkhoff polytope; Ehrhart series; complete
homogeneous symmetric polynomial; constant term; generalized Todd
polynomial; finite field.

\medskip
\noindent\textbf{2020 Mathematics Subject Classification.}
Primary 05A15; Secondary 05E05, 52B20.

\section{Introduction}

Let
\[
 \B_n=\bigl\{A=(a_{ij})\in\R_{\geq0}^{n\times n}:
 \sum_{j=1}^n a_{ij}=1,\quad\sum_{i=1}^n a_{ij}=1\bigr\}
\]
be the $n$th Birkhoff polytope. Its Ehrhart counting function is
\[
 H_n(r)=\#\bigl\{A=(a_{ij})\in\N^{n\times n}:
 \sum_j a_{ij}=r,\quad\sum_i a_{ij}=r\bigr\},\qquad r\in\N.
\]
In particular, $H_n(0)=1$ and $H_n(1)=n!$. This is the classical problem of counting semimagic squares.

The well-known Birkhoff--von Neumann theorem states that $\B_n$ is the convex hull of the permutation matrices~\cite{Birkhoff46,vonNeumann53}. The formulas $H_1(r)=1$ and $H_2(r)=r+1$ are immediate. In his 1915 treatise, MacMahon~\cite{MacMahon} obtained the first nontrivial formula for $H_3(r)$. In 1966, Anand, Dumir, and Gupta~\cite{ADG} conjectured the polynomiality and reciprocity properties of these counts. Stanley~\cite{Stanley73} and Ehrhart~\cite{Ehrhart73} independently proved these conjectures in 1973.

Several methods have been developed for the exact computation of
$H_n(r)$. In 1999, Chan and Robbins~\cite{CR} computed the complete
polynomial $H_8(r)$ and the volume of $\B_8$. In 2003, Beck and
Pixton~\cite{BP} developed a residue method and computed $H_9(r)$;
their accompanying data include volumes through order
$10$~\cite{BPdata}. In 2009, De Loera, Liu, and
Yoshida~\cite{DLY} gave a multivariate generating function for
semimagic squares and derived formulas for Ehrhart coefficients
and the volumes of Birkhoff polytopes and their faces.
In 2025, Xin, Zhang, Zhou, and Zhong~\cite{XZZZ25} introduced
the constant term algebra of type $A$, motivated by an iterated
Laurent series treatment of Beck and Pixton's residue formulas,
and described its structure using a basis indexed by forests.

Our main contributions are as follows.
\begin{enumerate}
\item We compute the complete Ehrhart series of $ \B_n $ for $ n\le 12 $, together with their normalized volumes. The series for $ \B_{10},\B_{11},\B_{12} $ are new and are listed in Appendix~\ref{app:birkhoff-series}.

\item We develop a constant-term and Todd-polynomial method for these computations, prove its correctness, and analyze its arithmetic complexity.

\item We apply the method to the World Cup problem
\[
 \mathcal W_n=\{A\in\B_n:a_{ii}=0\text{ for all }i\}
\]
introduced by Ekhad and Zeilberger~\cite{EZ}, and obtain the complete series through order $12$ (Appendix~\ref{app:worldcup-series}).
\end{enumerate}

Write $ h_r(\mathbf X) $ for the complete homogeneous symmetric polynomial in $ \mathbf X=(x_1,\ldots,x_n) $. Then $ H_n(r) $ is the coefficient of $ x_1^r\cdots x_n^r $ in $ h_r(\mathbf X)^n $. By homogeneity, this coefficient can be written as a finite average over $ m $th roots of unity for any $ m>r $, a special case of Karasev and Petrov's coefficient formula~\cite{KP}. We use $ m=r+1 $. Permutation and scaling symmetries reduce the sum to a weighted sum over multisets. The remaining task is to evaluate $ h_r $ on these multisets.

 We express each evaluation as a univariate constant term. The partial-fraction decomposition then has contributions only from repeated roots. This use of selected partial-fraction terms also appears in Xin's partition-analysis algorithms~\cite{Xin04,Xin15}. A change of variables due to Liu and Xin~\cite[Proposition~2.4]{LX26} expresses each contribution as a generalized Todd coefficient, computed by the methods of Xin, Zhang, and Zhang~\cite{XZZ25}. A repeated root of multiplicity $ \mu\ge2 $ requires only a Todd coefficient of degree $ \mu-2 $, independent of $ r $. This leads to an algorithm whose complexity is
 $O_n(m^{n-5}+m^4)$ field operations.

Our main counting formula, Theorem~\ref{thm:anchor}, expresses
$H_n(r)$ modulo a suitable prime as a weighted sum over root
multisets in which $1$ has multiplicity at least $2$.
Proposition~\ref{thm:todd-local} evaluates each summand through
generalized Todd coefficients, and Corollary~\ref{cor:max-anchor}
reduces the sum further by marking a root of maximal multiplicity.
The marking argument gives the correct weights even when some
scaling orbits have nontrivial stabilizers. These results determine
the count in a finite field. A combinatorial upper bound then
allows its integer value to be recovered by the Chinese remainder
theorem. Ehrhart symmetry determines the complete series from
finitely many counts.

For the World Cup problem, Ekhad and Zeilberger~\cite{EZ} computed the counting polynomials through order $6$. Zeilberger offered a prize for orders $7$ through $11$. In 2026, the cases $7$ and $8$ were solved by Xin and Zhang and, independently, by Vandevere; see the website attached to~\cite{EZ}. Our computations give the complete series through order $12$ and agree with the known results through order $8$. The series for $9\le n\le12$ are listed in Appendix~\ref{app:worldcup-series}. For this problem, the polynomial for row $i$ is $ h_r(\mathbf{X}\setminus\{x_i\})=h_r(\mathbf{X})-x_i h_{r-1}(\mathbf{X})$, where one occurrence of $x_i$ is removed. The two values use the same Todd coefficients, with one additional convolution for $h_{r-1}$. Finite products sum five nonzero multiplicity classes, and one further class vanishes. Together with individual evaluation of the remaining multisets, this gives $O_n(m^{n-4}+m^4)$ field operations for fixed $n\ge4$.

\Needspace{7\baselineskip}
The remainder of the paper is organized as follows.
Section~\ref{sec:method} gives the constant-term method for
computing the Birkhoff Ehrhart series, including the main counting
formula, symmetry reduction, and class summation.
Section~\ref{sec:algorithms} presents the algorithms and their
complexity, an order-$3$ example, and computational results,
including normalized volumes. Section~\ref{sec:worldcup} applies
the method to the World Cup problem.
Section~\ref{sec:conclusion} contains the concluding remarks.
Appendices~\ref{app:birkhoff-series} and~\ref{app:worldcup-series}
give the complete Birkhoff series for orders $10$--$12$ and
the World Cup series for orders $9$--$12$, respectively.

\section{A constant-term method for Birkhoff Ehrhart series}\label{sec:method}

The computation of the Ehrhart series of $\B_n$ proceeds as follows.
Ehrhart symmetry reduces the required data to finitely many values of
$H_n(r)$.
The row generating polynomial expresses each count as a coefficient
of a symmetric polynomial, and a finite-field root filter turns
that coefficient into an average. Permutation and scaling
symmetries reduce the outer sum. We then express each evaluation
as local constant terms, compute them by generalized Todd
coefficients, and sum low-multiplicity classes through finite
subset products.

\subsection{Ehrhart structure and finite reconstruction}\label{sec:structure}

We begin with the established structure that determines how many
counts are needed.
For $n\geq2$, put
\begin{equation*}
 d=(n-1)^2,\qquad s=(n-1)(n-2),\qquad K=s/2.
\end{equation*}
The integer $s$ is even. The Birkhoff--von Neumann theorem
\cite{Birkhoff46} implies that $\B_n$ is a lattice polytope; its dimension is $d$. Its Ehrhart series has the form
\begin{equation}\label{eq:series}
 \Ehr_{\B_n}(z)=\sum_{r\geq0}H_n(r)z^r
 =\frac{h_n^*(z)}{(1-z)^{d+1}},\qquad
 h_n^*(z)=\sum_{j=0}^s a_jz^j.
\end{equation}
When comparing different orders, we write $a_{n,j}=a_j$ to make
the dependence on $n$ explicit.
The numerator has nonnegative integer coefficients, degree $s$, and
\begin{equation}\label{eq:pal}
 a_0=a_s=1,\qquad a_j=a_{s-j}.
\end{equation}
Nonnegativity follows from Stanley's result for magic-labeling algebras
\cite[Proposition~4.5]{Stanley76}, applied to the complete bipartite graph
$K_{n,n}$; see also \cite[Theorem~2.1]{Stanley80} for the general
integral-polytope formulation. The degree and symmetry follow from
reciprocity and the codegree; the complete numerator statement is
\cite[Corollary~6.4]{BR}. Here the codegree is $n$.
The underlying interior translation
subtracts the matrix with all entries $1$ from a positive integral matrix. Equivalently,
the usual reciprocity relations specialize to
\begin{equation*}
 H_n(-n-r)=(-1)^dH_n(r),\qquad
 H_n(-1)=\cdots=H_n(-(n-1))=0.
\end{equation*}
Polynomiality and these reciprocity relations are already treated in
Stanley's work on magic labelings
\cite[Theorems~1.2 and~1.3, and Corollary~1.4]{Stanley73};
see also \cite[Proposition~4.6.2]{StanleyEC}.
These results provide the structural input for reconstruction.

\begin{proposition}\label{prop:reconstruct}
For $n\geq2$, the values $H_n(0),\ldots,H_n(K)$ determine the complete
Ehrhart series. The first half of its numerator is
\begin{equation}\label{eq:transform}
 a_k=\sum_{i=0}^k(-1)^{k-i}\binom{d+1}{k-i}H_n(i),
 \qquad 0\leq k\leq K.
\end{equation}
The remaining coefficients follow from~\eqref{eq:pal}, and
\begin{equation}\label{eq:binomial}
 H_n(r)=\sum_{j=0}^s a_j\binom{r+d-j}{d}
\end{equation}
is an identity of polynomials in $r$.
\end{proposition}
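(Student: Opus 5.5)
The argument is a direct manipulation of the rational form~\eqref{eq:series}, using the structural facts already recorded: the numerator $h_n^*$ has degree $s$ and satisfies the palindromic relations~\eqref{eq:pal}.

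\textbf{Step 1: the coefficients $a_k$.} Multiply~\eqref{eq:series} by $(1-z)^{d+1}$ to get
\[
 h_n^*(z)=(1-z)^{d+1}\sum_{r\ge0}H_n(r)z^r .
\]
Expand $(1-z)^{d+1}=\sum_{t}(-1)^t\binom{d+1}{t}z^t$ and compare coefficients of $z^k$. This gives
\[
 a_k=\sum_{i=0}^k(-1)^{k-i}\binom{d+1}{k-i}H_n(i)
\]
for every $k\ge0$, and in particular for $0\le k\le K$, which is~\eqref{eq:transform}. Each $a_k$ with $k\le K$ depends only on $H_n(0),\dots,H_n(K)$.

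\textbf{Step 2: the remaining coefficients.} Since $s=2K$, every index $j$ with $K<j\le s$ satisfies $s-j<K$. So~\eqref{eq:pal} gives $a_j=a_{s-j}$, which is already determined by Step 1. Hence the whole numerator $h_n^*(z)$, and therefore the whole series, is determined by $H_n(0),\dots,H_n(K)$.

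\textbf{Step 3: the identity~\eqref{eq:binomial}.} Expand
\[
 (1-z)^{-(d+1)}=\sum_{u\ge0}\binom{u+d}{d}z^u .
\]
Multiplying by $h_n^*(z)$ and extracting the coefficient of $z^r$ gives
\[
 H_n(r)=\sum_{j\le \min(r,s)}a_j\binom{r-j+d}{d}
\]
for all $r\ge0$. Regard $\binom{r+d-j}{d}=(r-j+d)\cdots(r-j+1)/d!$ as a polynomial in $r$. For $0\le r<j$ the factors run from $r-j+1\le 0$ up to $r-j+d\ge0$ (because $j\le s\le d$), so one factor is $0$ and the binomial vanishes. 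Therefore the sum may be extended to all $j\le s$ for every $r\ge 0$. Both sides of~\eqref{eq:binomial} are then polynomials in $r$ (the left side by the Ehrhart polynomiality cited above) that agree at infinitely many integers. They are therefore equal as polynomials.

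The only point needing care is the vanishing check in Step 3. It requires $j\le d$, which holds since $s=(n-1)(n-2)<(n-1)^2=d$. The rest is routine coefficient extraction.
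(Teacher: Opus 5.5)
Your proposal is correct and follows the paper's proof essentially step for step. You extract coefficients after multiplying by $(1-z)^{d+1}$, use palindromicity for the upper half, and expand $(1-z)^{-d-1}$, using the vanishing of $\binom{r+d-j}{d}$ for $j>r\geq0$ to pass to a polynomial identity; your explicit check that $s<d$ spells out what the paper compresses into the remark that $r+d-j$ is then a nonnegative integer smaller than $d$.
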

\begin{proof}
Extract the coefficient of $z^k$ after multiplying~\eqref{eq:series} by
$(1-z)^{d+1}$. Palindromicity supplies the other coefficients.
Expanding $(1-z)^{-d-1}$ gives~\eqref{eq:binomial} for every nonnegative
integer $r$, hence as a polynomial identity. For $j>r\geq0$, the upper
argument $r+d-j$ is a nonnegative integer smaller than $d$, so the
corresponding binomial coefficient is $0$.
\end{proof}

\subsection{Symmetric functions and the root filter}\label{sec:filter}
It remains to compute the values $H_n(r)$ needed for the reconstruction. For each row, the prescribed sum is encoded by a generating polynomial. Let
\begin{equation}\label{eq:hr}
 h_r(\mathbf{X})=\sum_{\substack{\alpha_1,\ldots,\alpha_n\geq0\\\alpha_1+\cdots+\alpha_n=r}}
 x_1^{\alpha_1}\cdots x_n^{\alpha_n}
\end{equation}
denote the complete homogeneous symmetric polynomial in $\mathbf{X}=(x_1,\ldots,x_n)$. This notation is distinct from the notation $h_n^*(z)$ for the Ehrhart numerator. Each factor of $h_r(\mathbf{X})^n$ corresponds to one row. Selecting a monomial from a factor selects the entries of that row. The exponents in the product record the column sums. Hence
\begin{equation}\label{eq:coefficient}
 H_n(r)=[x_1^r\cdots x_n^r]h_r(\mathbf{X})^n.
\end{equation}

For the root filter, let $n\geq3$, $r\geq0$, and let $m>r$ be an integer. Choose a prime $p$ satisfying
\begin{equation}\label{eq:prime}
 p>n,\qquad m\mid p-1.
\end{equation}
Let $\Omega=\{a\in\F_p:a^m=1\}$. This set has exactly $m$ elements. If $m=1$, it is the singleton $\{1\}$. In a finite-field identity, an integer on the right-hand side is understood after reduction modulo $p$.

\begin{theorem}\label{thm:filter}
Under~\eqref{eq:prime},
\begin{equation}\label{eq:filter}
 H_n(r)=m^{-n}\sum_{\mathbf{X}\in\Omega^n}
 \left(\prod_{i=1}^n x_i^{-r}\right)h_r(\mathbf{X})^n
 \quad\text{in }\F_p.
\end{equation}
\end{theorem}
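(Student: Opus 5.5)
We prove \eqref{eq:filter} with the standard orthogonality of characters on the cyclic group $\Omega$.

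\textbf{Structure of $\Omega$.} Since $m\mid p-1$ and $\F_p^\times$ is cyclic of order $p-1$, the group $\Omega$ is cyclic of order exactly $m$. For an integer $e$, we claim
\[
\sum_{x\in\Omega}x^e=\begin{cases} m, & m\mid e,\\ 0, & m\nmid e.\end{cases}
\]
If $m\mid e$, every term equals $1$. Otherwise pick a generator $\zeta$ of $\Omega$; then $\zeta^e\neq1$, and multiplying the sum by $\zeta^e$ permutes its terms, so $(\zeta^e-1)\sum_{x\in\Omega}x^e=0$ and the sum vanishes. Note also that $m\mid p-1$ gives $p\nmid m$, so $m^{-n}$ makes sense in $\F_p$. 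The condition $p>n$ is not needed here.

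\textbf{Expanding the sum.} Write $h_r(\mathbf X)^n=\sum_{\beta}c_\beta\mathbf X^\beta$ over integer exponent vectors $\beta\in\N^n$ with $|\beta|=nr$. Here $c_\beta$ is the integer counting matrices with all row sums $r$ and column sums $\beta$, by the argument preceding \eqref{eq:coefficient}. Substituting into the right side of \eqref{eq:filter} and exchanging the finite sums factors the inner sum over $\Omega^n$ into a product:
\[
m^{-n}\sum_\beta c_\beta\prod_{i=1}^n\sum_{x_i\in\Omega}x_i^{\beta_i-r}.
\]
By the character sum, this equals $\sum_\beta c_\beta\,[\,m\mid \beta_i-r\text{ for all }i\,]$ in $\F_p$.

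\textbf{Only the diagonal term survives.} This is the one genuinely non-formal step, and it is where $m>r$ enters. Each $\beta_i$ is a nonnegative integer with $\beta_i\le nr$, and $\sum_i(\beta_i-r)=0$. Suppose $m\mid\beta_i-r$ for all $i$. Since $\beta_i\ge0$, we have $\beta_i-r\ge -r>-m$, so each $\beta_i-r\in\{0,m,2m,\ldots\}$ is nonnegative. A zero sum of nonnegative terms forces $\beta_i=r$ for all $i$. Hence only $\beta=(r,\ldots,r)$ contributes, giving $c_{(r,\dots,r)}=H_n(r)$ by \eqref{eq:coefficient}, reduced modulo $p$.
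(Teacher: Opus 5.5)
Your proof is correct and follows essentially the same route as the paper: character orthogonality on $\Omega$ keeps only the monomials with $\beta_i\equiv r \pmod m$ for every $i$. Since $\beta_i-r>-m$, each such difference is a nonnegative multiple of $m$, so homogeneity ($\sum_i(\beta_i-r)=0$) forces $\beta=(r,\ldots,r)$, and your side remarks that $p\nmid m$ and that $p>n$ is not needed here are accurate.
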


\begin{proof}
For any integer $b$, character orthogonality gives
\[
 \frac1m\sum_{x\in\Omega}x^b=
 \begin{cases}1,&m\mid b,\\0,&m\nmid b.\end{cases}
\]
Consequently, a monomial $\mathbf{X}^{\boldsymbol{\alpha}}$ of $h_r(\mathbf{X})^n$ contributes to the right-hand side of~\eqref{eq:filter} exactly when $\alpha_i\equiv r\pmod m$ for every $i$. Since $0\leq r<m$, each nonnegative exponent can be written as $\alpha_i=r+m u_i$, with $u_i\geq0$. By homogeneity, $\sum_i\alpha_i=nr$, so $\sum_i u_i=0$. Since every $u_i\geq0$, this forces $u_i=0$ for all $i$. Hence the only surviving coefficient is the one in~\eqref{eq:coefficient}.
\end{proof}

\begin{remark}
The filter also follows from \cite[Theorem~4]{KP}. Apply the coefficient formula with every interpolation set equal to $\Omega$, every target exponent equal to $m-1$, and the polynomial
\[
 \left(\prod_{i=1}^n x_i^{m-1-r}\right)h_r(\mathbf{X})^n.
\]
This polynomial has degree $n(m-1)$, and its coefficient of $\prod_i x_i^{m-1}$ is $H_n(r)$.
\end{remark}

The same proof applies to any homogeneous polynomial of total degree $nr$ when the desired exponent vector is $(r,\ldots,r)$. In particular, $m=r+1$ is sufficient. More generally, to compute $H_n(0),\ldots,H_n(R)$, one may take $m=R+1$ and use the same primes and root set for every count. Then only $h_r$ and the weights $x_i^{-r}$ in the filter change with $r$. This also allows the root tables to be shared among the counts. The algorithm below instead chooses $m=r+1$ for each count. This choice minimizes the root set and, by~\eqref{eq:distinct-zero}, ensures that every multiset of distinct roots contributes $0$. A larger common root order need not preserve this vanishing for smaller values of $r$.

We use three reductions to speed up the computation.
\begin{enumerate}
\item Reduce the $m^n$ ordered tuples in the average by permutation and scaling symmetries, as in Subsection~\ref{sec:anchor}.
\item For a fixed multiset $M$, evaluate $h_r(M)$ efficiently. The usual recurrence for $\prod_{x\in M}(1-xu)^{-1}$ uses $O(nr)$ field operations up to degree $r$, or $O(nK)$ over the reconstruction range. Subsections~\ref{sec:local-ct} and~\ref{sec:dd} replace this calculation with local Todd coefficients. The number of terms in the resulting expression for $h_r(M)$ depends only on the multiplicities of the elements of $M$. In particular, it is independent of $r$ and $K$.

\item Combine the terms that belong to the same multiplicity class. Subsection~\ref{sec:agg-classes} uses elementary symmetric functions to sum over the choices of singleton roots. The finite-product identity and its applications in Subsection~\ref{sec:moments} extend this calculation to the polynomial statistics that occur at higher multiplicities.
\end{enumerate}

\subsection{Symmetry reduction of the root average}\label{sec:anchor}

For the rest of the method we choose $m=r+1$, the smallest root
order allowed by Theorem~\ref{thm:filter}. Thus $a^{-r}=a$ for
$a\in\Omega$. Let $\calM$ be the set of multisets of size $n$
from $\Omega$. For $M\in\calM$, let $\supp(M)$ be its set of
distinct elements and $\mu_a$ the multiplicity of $a$.
The symmetry of $h_r$ makes its value $h_r(M)$ independent of
the ordering of the entries. Permutation symmetry gives the
multinomial weights; a marking argument handles scaling.
For
$M\in\calM$, define
\begin{equation*}
 \mult(M)=\frac{n!}{\prod_a\mu_a!},\qquad
 \Theta(M)=\left(\prod_a a^{\mu_a}\right)h_r(M)^n,\qquad
 t(M)=\#\{a:\mu_a\geq2\}.
\end{equation*}
The integer $\mult(M)$ is the number of ordered tuples represented by $M$.
Homogeneity in~\eqref{eq:hr} gives
$h_r(\lambda M)=\lambda^r h_r(M)$.
Thus simultaneous scaling by $\lambda\in\Omega$
preserves multiplicities and $\Theta$, since
\begin{equation*}
 \Theta(\lambda M)=\lambda^{n+nr}\Theta(M)=\lambda^{nm}\Theta(M)=\Theta(M).
\end{equation*}

Multisets with no repeated root contribute $0$. Indeed, if such
an $M$ exists, then $n\leq m$ and
\[
 Q_M(u)=\frac{1-u^m}{\prod_{b\in M}(1-bu)}
       =\prod_{b\in\Omega\setminus M}(1-bu)
\]
is a polynomial of degree $m-n$. Since $r=m-1<m$,
\begin{equation}\label{eq:distinct-zero}
 h_r(M)=[u^r]\frac{Q_M(u)}{1-u^m}=[u^r]Q_M(u)=0.
\end{equation}
The last equality uses $m-n<r$. Thus only multisets with at least
one repeated root need to be considered. For a general root
order $m>r$, the same argument gives vanishing when $m-r<n$;
it need not give vanishing for the smaller counts evaluated
with a common larger root set.

The following identity gives the weights for marking a root. To mark a root means to select one eligible root value as a distinguished root. Dividing all roots by this value then normalizes the marked root to $1$.

\begin{lemma}\label{lem:eligible-marking}
Let $\mathcal A$ be a finite set with an action of $\Omega$, and let
$c:\mathcal A\to\F_p$ be invariant under this action. Suppose that each
$A\in\mathcal A$ is assigned a nonempty subset $E(A)\subseteq\Omega$
such that
\[
 E(\lambda A)=\lambda E(A),\qquad 1\leq |E(A)|<p.
\]
Then
\begin{equation*}
 \sum_{A\in\mathcal A}c(A)
 =m\sum_{\substack{A\in\mathcal A\\1\in E(A)}}
           \frac{c(A)}{|E(A)|}.
\end{equation*}
No freeness assumption on the action is required.
\end{lemma}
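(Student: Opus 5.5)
The plan is a double-counting argument over marked pairs. Consider the set of pairs
\[
 \mathcal P=\{(A,\lambda):A\in\mathcal A,\ \lambda\in E(A)\}.
\]
Since $|E(A)|$ is a nonzero integer less than $p$, it is invertible in $\F_p$. We can therefore rewrite
\[
 \sum_{A\in\mathcal A}c(A)=\sum_{A\in\mathcal A}\sum_{\lambda\in E(A)}\frac{c(A)}{|E(A)|}
 =\sum_{(A,\lambda)\in\mathcal P}\frac{c(A)}{|E(A)|}.
\]
This step requires nonemptiness of $E(A)$ only so that the inner average is defined, and it needs the bound $|E(A)|<p$ so that $1/|E(A)|$ makes sense in $\F_p$.

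Next we reorganize the sum over $\mathcal P$ by the marked value $\lambda$. For each $\lambda\in\Omega$, the map $A\mapsto\lambda^{-1}A$ is a bijection on $\mathcal A$, because $\Omega$ is a group acting on $\mathcal A$. By the equivariance $E(\lambda^{-1}A)=\lambda^{-1}E(A)$, this map sends $\{A:\lambda\in E(A)\}$ bijectively onto $\{A':1\in E(A')\}$, with inverse $A'\mapsto\lambda A'$. Under this bijection we have $c(\lambda^{-1}A)=c(A)$ by invariance of $c$. Also $|E(\lambda^{-1}A)|=|\lambda^{-1}E(A)|=|E(A)|$, since multiplication by $\lambda^{-1}$ is injective on $\Omega$. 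Hence, for every fixed $\lambda$,
\[
 \sum_{\substack{A\in\mathcal A\\ \lambda\in E(A)}}\frac{c(A)}{|E(A)|}
 =\sum_{\substack{A'\in\mathcal A\\ 1\in E(A')}}\frac{c(A')}{|E(A')|}.
\]
Summing over the $m$ elements $\lambda\in\Omega$ gives the factor $m$ and yields the claimed identity.

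The only delicate point, and the one the statement emphasizes, is that no freeness is needed. This is automatic here: the argument never counts orbits or divides by orbit sizes. It uses only that each translate map $A\mapsto\lambda^{-1}A$ is a bijection of $\mathcal A$. Stabilizers are therefore irrelevant, and the sole divisions are by the integers $|E(A)|$, which are invertible modulo $p$ by hypothesis. I therefore expect no substantive obstacle, beyond checking carefully that equivariance is used in the direction $E(\lambda^{-1}A)=\lambda^{-1}E(A)$, which follows from the hypothesis applied with $\lambda^{-1}\in\Omega$.
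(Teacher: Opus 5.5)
Your proposal is correct and follows essentially the same route as the paper: split each $c(A)$ equally among its $|E(A)|$ eligible marks (invertible since $1\leq|E(A)|<p$), then use the action of $\lambda^{-1}$ as a bijection from the objects marked at $\lambda$ to those marked at $1$. This bijection preserves both $c$ and $|E(\cdot)|$, so all $m$ inner sums are equal. Like the paper, you never divide by an orbit size, so stabilizers play no role.
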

\begin{proof}
For each $A$, split its contribution equally among its eligible marks:
\[
 \sum_Ac(A)
 =\sum_{a\in\Omega}\ \sum_{\substack{A\in\mathcal A\\a\in E(A)}}
                  \frac{c(A)}{|E(A)|}.
\]
For each $a\in\Omega$, the action of $a^{-1}$ gives a bijection from the inner indexing set for mark $a$ to the inner indexing set for mark $1$. This bijection preserves the summand and the number of marks. Hence all $m$ inner sums are equal. This argument counts marked objects and never divides by the size of an orbit.
\end{proof}

\begin{theorem}\label{thm:anchor}
Let $\calM_1=\{M\in\calM:\mu_1\geq2\}$. Then
\begin{equation*}
 H_n(r)=m^{1-n}\sum_{M\in\calM_1}\frac{\mult(M)}{t(M)}\Theta(M)
 \quad\text{in }\F_p.
\end{equation*}
The formula is valid even when some scaling orbits have nontrivial
stabilizers.
\end{theorem}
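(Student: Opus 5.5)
We start from Theorem~\ref{thm:filter} with $m=r+1$. Since $x_i^{-r}=x_i$ for $x_i\in\Omega$, the summand for an ordered tuple $\mathbf X$ is $\bigl(\prod_i x_i\bigr)h_r(\mathbf X)^n$. This depends only on the underlying multiset $M$ and equals $\Theta(M)$. Grouping the $m^n$ tuples by their multiset, each $M\in\calM$ arises from exactly $\mult(M)$ tuples, so
\[
 H_n(r)=m^{-n}\sum_{M\in\calM}\mult(M)\Theta(M)\quad\text{in }\F_p .
\]
By~\eqref{eq:distinct-zero}, every $M$ with no repeated root has $h_r(M)=0$. We may therefore restrict the sum to $\calM^{\ge2}=\{M\in\calM:t(M)\geq1\}$.

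Next we apply Lemma~\ref{lem:eligible-marking} with $\mathcal A=\calM^{\ge2}$ and the scaling action $M\mapsto\lambda M$ of $\Omega$. We take $c(M)=\mult(M)\Theta(M)$ and $E(M)=\{a\in\Omega:\mu_a\geq2\}$. The required properties are as follows.
\begin{itemize}
\item $\calM^{\ge2}$ is stable under scaling, because multiplicities are transported: $\mu_{\lambda a}(\lambda M)=\mu_a(M)$.
\item $c$ is invariant. The function $\mult$ depends only on the multiplicity profile, and $\Theta(\lambda M)=\Theta(M)$ was shown above.
\item $E(\lambda M)=\lambda E(M)$, again by transport of multiplicities.
\item $|E(M)|=t(M)$ satisfies $1\le t(M)\le n/2<n<p$ by~\eqref{eq:prime}, so $t(M)$ is invertible in $\F_p$.
\end{itemize}
The lemma then gives
\[
 \sum_{M\in\calM^{\ge2}}\mult(M)\Theta(M)=m\sum_{\substack{M\in\calM^{\ge2}\\ 1\in E(M)}}\frac{\mult(M)\Theta(M)}{t(M)}.
\]
The condition $1\in E(M)$ means exactly $\mu_1\ge2$, so the right-hand index set is $\calM_1$. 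Combining this with the prefactor $m^{-n}$ yields $m^{1-n}$, which is the claimed formula. We also note that $m$ is invertible mod $p$ because $m\mid p-1$.

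The only delicate point is the stabilizer issue. This is exactly what Lemma~\ref{lem:eligible-marking} avoids, since it counts marked objects rather than orbits. So the work reduces to checking the lemma's hypotheses carefully. The most subtle of these is that $t(M)$ is a unit mod $p$, which is where $p>n$ is used. The remaining checks are routine bookkeeping of the grouping by multisets and of the invariance of $\Theta$.
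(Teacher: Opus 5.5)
Your proof is correct and is essentially the paper's argument. Both group the ordered tuples of Theorem~\ref{thm:filter} into multisets with weight $\mult(M)$, discard the all-distinct multisets by~\eqref{eq:distinct-zero}, and apply Lemma~\ref{lem:eligible-marking} with $E(M)=\{a:\mu_a\geq2\}$, using $1\le t(M)<p$ for invertibility. Your explicit checks of scaling stability and of the invariance of $c$, which the paper leaves implicit, are correct.
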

\begin{proof}
Grouping ordered tuples in Theorem~\ref{thm:filter} gives
$m^{-n}\sum_M\mult(M)\Theta(M)$. As shown above, terms with
$t(M)=0$ vanish. On the remaining multisets, choose
$E(M)=\{a:\mu_a\geq2\}$ and apply
Lemma~\ref{lem:eligible-marking}. Here
$1\leq|E(M)|=t(M)\leq n<p$, so every denominator is invertible.
\end{proof}

More generally, for a scaling-invariant family $\mathcal F$ of
multisets, its contribution to the root average is
\begin{equation}\label{eq:eligible-family}
  m^{-n}\sum_{M\in\mathcal F}\mult(M)\Theta(M)
 =m^{1-n}\sum_{\substack{M\in\mathcal F\\1\in E(M)}}
       \frac{\mult(M)}{|E(M)|}\Theta(M),
\end{equation}
whenever the eligible roots satisfy the hypotheses of the lemma.

For the multiset enumeration, we mark roots
of largest multiplicity. Set
\[
 q(M)=\max_{a\in\supp M}\mu_a,\qquad
 \eta(M)=\#\{a:\mu_a=q(M)\}.
\]

\begin{corollary}\label{cor:max-anchor}
Let $\calM_{\max}=\{M\in\calM:\mu_1=q(M)\geq2\}$. Then
\begin{equation}\label{eq:anchor-max}
 H_n(r)=m^{1-n}\sum_{M\in\calM_{\max}}
                  \frac{\mult(M)}{\eta(M)}\Theta(M).
\end{equation}
Restricting this sum to a scaling-invariant family gives that
family's contribution.
\end{corollary}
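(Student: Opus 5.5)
The plan is to mimic the proof of Theorem~\ref{thm:anchor}, replacing the eligible set of repeated roots by the set of roots of maximal multiplicity. Begin, as there, by grouping the ordered tuples in Theorem~\ref{thm:filter} by multiset. This gives
\[
 H_n(r)=m^{-n}\sum_{M\in\calM}\mult(M)\Theta(M)
 \quad\text{in }\F_p .
\]
By~\eqref{eq:distinct-zero}, the multisets with $t(M)=0$ contribute nothing. These are exactly the multisets with $q(M)=1$. Hence the sum may be restricted to the family $\mathcal A=\{M\in\calM:q(M)\geq2\}$. The family $\mathcal A$ is invariant under scaling by $\Omega$, because scaling permutes the support and preserves all multiplicities.

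Next apply Lemma~\ref{lem:eligible-marking} to $\mathcal A$ with $c(M)=\mult(M)\Theta(M)$ and
\[
 E(M)=\{a\in\supp M:\mu_a=q(M)\}.
\]
Three hypotheses need checking.
\begin{enumerate}
\item Invariance of $c$: $\mult$ depends only on the multiplicities, and $\Theta(\lambda M)=\Theta(M)$ was shown above.
\item Equivariance of $E$: the multiplicity of $\lambda a$ in $\lambda M$ equals the multiplicity of $a$ in $M$, and $q(\lambda M)=q(M)$. Therefore $E(\lambda M)=\lambda E(M)$.
\item Size of $E$: $E(M)$ is nonempty, and $|E(M)|=\eta(M)\leq n<p$ by~\eqref{eq:prime}, so $\eta(M)$ is invertible in $\F_p$.
\end{enumerate}

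The lemma then gives $m^{1-n}$ times a sum over those $M\in\mathcal A$ with $1\in E(M)$, weighted by $1/\eta(M)$. The condition $1\in E(M)$ together with $q(M)\geq2$ says exactly that $\mu_1=q(M)\geq2$, so the index set is $\calM_{\max}$. This is~\eqref{eq:anchor-max}. For the final sentence of the corollary, run the same argument with $\mathcal A$ replaced by $\mathcal F\cap\{q\geq2\}$ for any scaling-invariant family $\mathcal F$. This is precisely~\eqref{eq:eligible-family} with the new choice of $E$.

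I do not expect a real obstacle. The only point needing care is that $E$ must be defined intrinsically in terms of multiplicities, so that equivariance holds even when an orbit has a nontrivial stabilizer. The lemma already avoids dividing by orbit sizes, so no freeness condition is needed.
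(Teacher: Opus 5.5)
Your proposal is correct and follows essentially the same route as the paper. Both take $E(M)=\{a:\mu_a=q(M)\}$, check that it is scaling-equivariant with $1\le\eta(M)\le n<p$, discard the all-distinct multisets via \eqref{eq:distinct-zero}, and apply Lemma~\ref{lem:eligible-marking} (equivalently \eqref{eq:eligible-family}); your explicit restriction to $q(M)\geq2$ and identification of $\{1\in E(M)\}$ with $\calM_{\max}$ simply spell out steps the paper leaves implicit.
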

\begin{proof}
Take $E(M)=\{a:\mu_a=q(M)\}$. Multiplication of roots by $\lambda$
preserves both maximality and the number of ties. The set is nonempty,
and $\eta(M)\leq n<p$. Apply~\eqref{eq:eligible-family}, omitting the
all-distinct multisets, whose contributions vanish.
\end{proof}

In particular, after fixing the multiplicity $q\ge2$ of the marked root at $1$, the enumeration need only consider other multiplicities at most $q$.
 In its summand, the divisor is $\eta(M)$, not $t(M)$. These two quantities may differ, even though they arise from the same marking identity. It remains to evaluate $h_r(M)$ in these weighted sums. The next two subsections express $h_r(M)$ as a sum of local constant terms and compute these terms using generalized Todd coefficients. Together with~\eqref{eq:anchor-max}, this gives every required count modulo an admissible prime.

\subsection{Reduction to local constant terms}\label{sec:local-ct}

Fix a multiset $M\in\calM$. We retain $m=r+1$ and the prime
conditions~\eqref{eq:prime}. Products and sums over a multiset
include each entry with its multiplicity.

Over a field $\mathbb K$, for a Laurent series
$L(u)=\sum_{j\geq j_0}L_j u^j$, write
$\CT\limits_u L(u)=L_0$.  Rational functions at $u=0$ are expanded in their
Laurent series there; in particular, $(1-cu)^{-1}$ has its geometric
expansion.  Since a series containing only positive powers has constant term $0$,
\begin{equation*}
 \CT\limits_u L(u)A(u)=\CT\limits_u\bigl(L(u)-u^\ell\bigr)A(u),
 \qquad A(u)\in\mathbb K[[u]],\quad \ell\geq1.
\end{equation*}
The generating function of the complete homogeneous symmetric
polynomials therefore gives
\begin{equation*}
 h_r(M)=\CT\limits_u E_M(u),\qquad
 E_M(u)=\frac{u^{-r}-u}{\prod_{b\in M}(1-bu)},
\end{equation*}

For local contributions, we use the following notation.
If $\alpha\ne0$ and the partial-fraction
block of a rational function $E(u)$ at $\alpha$ is
\[
 \frac{A_\alpha(u)}{(1-u/\alpha)^\ell},\qquad
 \deg A_\alpha<\ell,
\]
then $\CT\limits_{u=\alpha}E(u):=A_\alpha(0)$; the contribution is $0$ if
$E$ is regular at $\alpha$.  This notation refers to a whole
partial-fraction block, rather than the constant coefficient of
$E(\alpha+u)$.  The partial-fraction approach of Xin~\cite[Section~4]{Xin04}
allows the desired constant term to be recovered from these blocks.

Here $E_M$ is proper as a rational function of $u$: its behavior at
infinity is $O(u^{1-n})$.  Its principal part at $0$ has only negative
powers and contributes nothing to $\CT\limits_u E_M$.  At $u=a^{-1}$,
the numerator $u^{-r}(1-u^m)$ has a simple zero because $a^m=1$
and $p\nmid m$.  A root of multiplicity $\mu_a$ consequently produces
a pole of order $\mu_a-1$, if this number is positive.  Thus
\begin{equation}\label{eq:local}
 h_r(M)=\sum_{\substack{a\in\supp(M)\\\mu_a\geq2}}
              \CT\limits_{u=a^{-1}}E_M(u).
\end{equation}
This also proves the vanishing for all-distinct multisets directly
from the constant-term representation.

The choice $m=r+1$ ensures both the smallest root set and a proper
rational function after cancellation. With a common root order
$m>r$, the analogous subtraction gives numerator
$u^{-r}-u^{m-r}$. The resulting rational function is
$O(u^{m-r-n})$ at infinity.
When $m-r\geq n$, a polynomial part may contribute to the
constant term. Thus the common-root filter remains valid, but
the repeated-root reduction above uses the stated choice of $m$.

\subsection{Computation of the local Todd coefficients}\label{sec:dd}

Let $a$ have multiplicity $\mu_a\geq2$ in $M$, and write its
partial-fraction block in $E_M(u)$ as
$A_{a^{-1}}(u)/(1-au)^{\mu_a-1}$, with
$\deg A_{a^{-1}}<\mu_a-1$.
We compute $A_{a^{-1}}(0)=\CT\limits_{u=a^{-1}}E_M(u)$.
Put $M'=M-a^*$, where $a^*$ denotes all occurrences of $a$;
thus $M'$ has $n-\mu_a$ entries and contains no $a$. Also write
$M'/a$ for the multiset obtained by dividing each entry by $a$.

In characteristic $0$, the change of variables of Liu and
Xin~\cite[Proposition~2.4]{LX26} reads
\begin{equation}\label{eq:lx-change}
 \CT\limits_{u=\alpha}E(u)=-\CT\limits_s\,sE(\alpha e^s).
\end{equation}
We apply it before reducing the necessary coefficients modulo $p$.
All exponential, logarithmic, and Todd series below are used only to
the stated finite order.  They do not denote infinite exponential or
logarithmic series over $\F_p$.

Put
\begin{equation*}
 T(s)=\frac{s}{e^s-1},\qquad
 R_c(s)=\frac{1-c}{1-ce^s}
       =\frac{1}{1-y_c(e^s-1)},\qquad y_c=\frac{c}{1-c}
       \quad(c\ne1).
\end{equation*}
These are the two normalized (power series) factors used by Xin, Zhang, and
Zhang~\cite[Section~3]{XZZ25} to define generalized Todd polynomials.

\begin{proposition}\label{thm:todd-local}
Let $a\in\supp(M)$ have multiplicity $\mu_a\geq2$.
Define
\begin{align}
 C_a&=\frac{(-1)^{\mu_a} m}{a}
       \prod_{b\in M'}(1-b/a)^{-1},\label{eq:todd-prefactor}\\
 F_a(s)&=e^{-rs}\frac{T(s)^{\mu_a}}{T(ms)}
       \prod_{b\in M'}R_{b/a}(s).\label{eq:todd-normalized}
\end{align}
Then
\begin{equation}\label{eq:todd-local}
 \CT\limits_{u=a^{-1}}E_M(u)=C_a[s^{\mu_a-2}]F_a(s),\qquad
 h_r(M)=\sum_{\substack{a\in\supp(M)\\\mu_a\geq2}}
                    C_a[s^{\mu_a-2}]F_a(s).
\end{equation}
The required truncations $F_a(s)\bmod s^{\mu_a-1}$ are well defined
modulo every prime satisfying~\eqref{eq:prime}.
\end{proposition}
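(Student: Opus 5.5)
The proof substitutes $u=a^{-1}e^{s}$ into $E_M$ via the Liu--Xin change of variables~\eqref{eq:lx-change}. We first prove the identity in characteristic $0$, where~\eqref{eq:lx-change} is available, and then reduce modulo $p$. Let $\zeta$ be a primitive $m$th root of unity in $\mathbb C$. Since $m\mid p-1$, there is a prime $\mathfrak p$ of $\Z[\zeta]$ above $p$ with $\Z[\zeta]/\mathfrak p\cong\F_p$, and the reduction map sends $\{\zeta^j\}$ bijectively onto $\Omega$. It is a bijection because $p\nmid m$, so $x^m-1$ is separable modulo $p$. We lift $M$ to a multiset $\widetilde M$ of complex $m$th roots of unity with the same multiplicity pattern. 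Distinct roots stay distinct modulo $\mathfrak p$, so the partial-fraction block of $E_{\widetilde M}$ at $\tilde a^{-1}$ has coefficients in the localization $\Z[\zeta]_{\mathfrak p}$. Its reduction is the block of $E_M$ at $a^{-1}$: the blocks are obtained by solving a linear system whose determinant is a product of differences of distinct roots, which are units at $\mathfrak p$. Thus it suffices to prove~\eqref{eq:todd-local} over $\mathbb Q(\zeta)$ and check that both sides are $\mathfrak p$-integral.

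In characteristic $0$, write $\mu=\mu_a$ and use $a^m=1$, so $a^{r}=a^{-1}$. Substituting $u=a^{-1}e^s$, the factors of $E_M(a^{-1}e^{s})$ become
\[
 u^{-r}=a^{-1}e^{-rs},\qquad 1-u^m=1-e^{ms}=-\frac{ms}{T(ms)},\qquad (1-au)^{\mu}=(1-e^s)^{\mu}=(-1)^{\mu}\frac{s^{\mu}}{T(s)^{\mu}}.
\]
For each $b\in M'$ we have $c=b/a\neq1$, and
\[
 1-ce^{s}=\frac{1-c}{R_c(s)}.
\]
Writing $E_M(u)=u^{-r}(1-u^m)/\prod_{b\in M}(1-bu)$ and collecting these factors gives
\[
 sE_M(a^{-1}e^s)=-(-1)^{\mu}\frac{m}{a}\prod_{b\in M'}(1-b/a)^{-1}\,s^{2-\mu}F_a(s)=-C_a\,s^{2-\mu}F_a(s).
\]
Taking $-\CT_s$ yields $C_a[s^{\mu-2}]F_a(s)$, which is the first identity in~\eqref{eq:todd-local}. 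Summing over $a$ with $\mu_a\ge2$ and using~\eqref{eq:local} gives the formula for $h_r(M)$.

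The main obstacle is the integrality and well-definedness statement; for this we check each factor of $F_a$ modulo $s^{\mu-1}$, where $\mu-2\le n-2$.
\begin{itemize}
\item $e^{-rs}$ has coefficients $(-r)^k/k!$ with $k\le n-2$.
\item $1/T(ms)=(e^{ms}-1)/(ms)$ has coefficients $m^k/(k+1)!$ with $k+1\le n-1$.
\item $T(s)$ is the reciprocal of the power series $\sum_k s^k/(k+1)!$, whose constant term is $1$. Its coefficients through degree $\mu-2$ therefore involve only denominators dividing products of $(k+1)!$ with $k+1\le n-1$.
\item $R_c(s)=1/(1-y_c(e^s-1))$ expands as $\sum_j y_c^j(e^s-1)^j$, where $y_c=c/(1-c)$. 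Here $1-c=1-b/a$ is a $\mathfrak p$-unit because $b\ne a$ modulo $\mathfrak p$. The expansion of $e^s-1$ needs only factorials below $n$.
\end{itemize}
Since $p>n$, every denominator is prime to $p$, so each truncation lies in $\Z[\zeta]_{\mathfrak p}[s]/(s^{\mu-1})$. The prefactor $C_a$ is likewise $\mathfrak p$-integral. Hence the truncation $F_a\bmod s^{\mu_a-1}$ is well defined over $\F_p$, computed by the same finite formulas, and reduction modulo $\mathfrak p$ transports the characteristic-$0$ identity to $\F_p$.
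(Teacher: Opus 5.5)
Your proof is correct and follows essentially the same route as the paper. Both use the Liu--Xin substitution $u=a^{-1}e^{s}$ with $1-e^s=-s/T(s)$, $1-e^{ms}=-ms/T(ms)$ and $1-ce^s=(1-c)/R_c(s)$, then check that every truncated coefficient has denominators prime to $p$; the only difference is that you make the characteristic-$0$-to-$\F_p$ transfer explicit, by lifting to $\Z[\zeta]$ and reducing modulo a degree-one prime $\mathfrak p$ above $p$ (including the reduction of partial-fraction blocks), whereas the paper asserts this transfer in one sentence.
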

\begin{proof}
Since $a^r=a^{-1}$, the summand in~\eqref{eq:local}, after applying
\eqref{eq:lx-change}, is
\[
 -\frac1a\CT\limits_s
 \frac{s e^{-rs}(1-e^{ms})}{(1-e^s)^{\mu_a}}
 \prod_{b\in M'}(1-(b/a)e^s)^{-1}.
\]
Using $1-e^s=-s/T(s)$ and $1-e^{ms}=-ms/T(ms)$ transforms the
Laurent series inside this constant term, including the factor $-1/a$,
into $C_a s^{2-\mu_a}F_a(s)$.  This proves~\eqref{eq:todd-local}.

Only coefficients of degree at most $\mu_a-2\leq n-2$ in the normalized
factors are needed.  They can be constructed by finite polynomial
operations from the coefficients of $e^s$ through degree $n-1$,
division by series with constant coefficient $1$, and division by
integers at most $n-2$ for logarithms and exponentials.  Thus their
numerical denominators are products of integers smaller than $n$;
the other denominators are powers of the nonzero elements $1-b/a$.
The condition $p>n$ makes every such operation valid, while
$m\mid p-1$ ensures $m\ne0$ in $\F_p$.  The characteristic-$0$
identity therefore reduces to the asserted finite-field identity.
\end{proof}

In the parametrization of~\cite[Equation~(5)]{XZZ25}, the generalized
Todd series~\eqref{eq:todd-normalized} has exponential parameter $-r$.
Before normalization, the denominator factor $(1-e^s)^{\mu_a}$
gives a parameter multiset consisting of $\mu_a$ copies of $1$,
and the numerator factor $1-e^{ms}$ gives the parameter multiset
$\{m\}$. Each occurrence $b\in M'$ supplies one group with
parameter $y_{b/a}$. All other numerator groups are empty. Equal parameters may be retained as separate factors. The factor $s$ in~\eqref{eq:lx-change}, together with the zero $1-e^{ms}$, lowers the required coefficient order to $\mu_a-2$. Hence the local computation is precisely a generalized Todd coefficient computation. It uses the normalization and
logarithm--exponential steps underlying Algorithm CTGTodd
\cite[Section~4]{XZZ25}, with this coefficient order.

The specialization used in Algorithm~1 is as follows. Write
\[
 \log T(s)=\sum_{j\geq1}\beta_j s^j,
 \qquad
 \beta_1=-\frac12,\quad
 \beta_{2v}=-\frac{B_{2v}}{(2v)(2v)!}\ (v\geq1),\quad
 \beta_{2v+1}=0\ (v\geq1),
\]
where the Bernoulli numbers are defined by
$T(s)=\sum_{j\geq0}B_j s^j/j!$.
Equations~(6) and (7) of~\cite{XZZ25} give
\[
 \log R_c(s)=\sum_{j\geq1}C_j(y_c)s^j,
 \qquad
 C_j(y)=\frac1{j!}\sum_{v=1}^j(v-1)!\,
                   \genfrac{\{} {\}}{0pt}{}{j}{v}y^v,
\]
where $\genfrac{\{} {\}}{0pt}{}{j}{v}$ is a Stirling number of the second kind.
For a multiset $N$ of roots different from $1$, define the additive
statistics
\[
 \mathcal S_j(N)=\sum_{c\in N}C_j(y_c),\qquad 1\leq j\leq n-2.
\]
Every occurrence is included in this sum. It follows that, for
$1\leq j\leq \mu_a-2$,
\begin{equation}\label{eq:todd-log}
 \ell_{a,j}:=[s^j]\log F_a(s)
 =-r\,\mathbf1_{j=1}+(\mu_a-m^j)\beta_j
  +\mathcal S_j(M'/a).
\end{equation}
Given these coefficients, apply~\cite[Lemma~4]{XZZ25}: set $f_{a,0}=1$ and compute
\begin{equation}\label{eq:todd-exp-recurrence}
 f_{a,j}=\frac1j\sum_{i=1}^j i\ell_{a,i}f_{a,j-i},
 \qquad 1\leq j\leq \mu_a-2.
\end{equation}
Indeed, differentiating $F_a=\exp(\log F_a)$ gives
$F_a'=(\log F_a)'F_a$; comparison of the coefficient of $s^{j-1}$
gives exactly~\eqref{eq:todd-exp-recurrence}. Thus
$f_{a,j}=[s^j]F_a(s)$. For each repeated root, compute $C_a$ and
the statistics $\mathcal S_j(M'/a)$ for $j\leq \mu_a-2$, form the
logarithmic coefficients by~\eqref{eq:todd-log}, and apply the
recurrence through degree $\mu_a-2$. Summing $C_af_{a,\mu_a-2}$ over the
repeated roots completes the general evaluation of $h_r(M)$.

All ratios $b/a$ belong to $\Omega$.  Hence the arrays
$C_j(y_c)$ for $c\in\Omega\setminus\{1\}$ and $j\leq n-2$ can be
computed once for each pair $(m,p)$ and reused for every multiset.
The same is true of the coefficients $\beta_j$ and the powers $m^j$.
In particular, the formal parameters of the general Todd polynomial
have already been specialized to field elements: the subsequent
logarithm and exponential calculations are univariate.

The first cases give useful explicit formulas. Put
\[
 P_a(M)=\prod_{b\in M'}(1-b/a)^{-1}.
\]
Since $C_1(y)=y$ and $C_2(y)=(y+y^2)/2$, the statistics needed
at these multiplicities are explicitly
\[
 \mathcal S_1(M'/a)=\sum_{b\in M'}\frac{b}{a-b},\qquad
 \mathcal S_2(M'/a)=\frac12\sum_{b\in M'}\frac{ab}{(a-b)^2}.
\]
The local contribution is
\begin{equation}\label{eq:todd-small}
 A_{a^{-1}}(0)=\frac{m}{a}P_a(M)
 \begin{cases}
  1, & \mu_a=2,\\[3pt]
  \dfrac{m+1}{2}-\mathcal S_1(M'/a), & \mu_a=3,\\[6pt]
  \dfrac12\left(\mathcal S_1(M'/a)-\dfrac{m+2}{2}\right)^2
  +\mathcal S_2(M'/a)+\dfrac{m^2-4}{24}, & \mu_a=4.
 \end{cases}
\end{equation}
These expressions follow from $f_{a,0}=1$,
$f_{a,1}=\ell_{a,1}$, and
$f_{a,2}=\ell_{a,2}+\ell_{a,1}^2/2$.
Already at multiplicity $4$, two additive statistics and a
quadratic expression are required. Larger multiplicities are
handled uniformly by the recurrence. The low-multiplicity
formulas will also allow whole classes to be summed by moments
in Subsections~\ref{sec:agg-classes} and~\ref{sec:moments}.

The statistics permit reuse between nearby multisets. Fix $a$,
and suppose $M_1=M_2-\{x\}+\{y\}$ with $x,y\ne a$, where one
occurrence of $x$ is removed and one of $y$ is inserted. The
multiplicity of $a$ is unchanged. Writing $M_i'=M_i-a^*$ gives
\begin{align*}
 \mathcal S_j(M_1'/a)
  &=\mathcal S_j(M_2'/a)-C_j(y_{x/a})+C_j(y_{y/a}),\\
 P_a(M_1)&=P_a(M_2)\frac{1-x/a}{1-y/a}.
\end{align*}
Thus all required statistics and the prefactor can be updated
without rescanning the common entries. For a fixed multiplicity
pattern, the implementation uses the same identities along
shared partial root assignments, as described in
Subsection~\ref{sec:crt}.

Finally, index $\Omega$ as $\{1,\omega,\ldots,\omega^{m-1}\}$.
Products and quotients of roots then correspond to addition
and subtraction of indices modulo $m$; for instance,
$b/a=\omega^{j-i}$ if $a=\omega^i$ and $b=\omega^j$.
After the root and ratio tables are constructed, these operations
use integer index arithmetic and table lookup. Operations on
general field elements, including $1-b/a$ and the sums in
the recurrence, are still counted as field operations.

\subsection{Summing multiplicity classes}\label{sec:agg-classes}

Although the local Todd coefficients have small degrees, the
number of multisets can still make their individual evaluation
costly. For classes with low repeated multiplicities, most of
these choices come from the singleton roots. We first explain
how elementary symmetric functions combine these choices for
the pattern $(2^t)$, and then describe the additional polynomial
statistics needed at higher multiplicities. The finite-product
evaluation of these statistics is proved and illustrated in
Subsection~\ref{sec:moments}. We use primes $p>2n$ for the
class sums, so that all required factorials are invertible.
The root filter and local Todd formula themselves remain valid
under~\eqref{eq:prime}.

Let $R\subseteq\Omega$ consist of $t$ distinct double roots,
where $1\leq t\leq\lfloor n/2\rfloor$, and put $k=n-2t$.
The remaining roots form a set $S\subseteq\Omega\setminus R$
of size $k$. Thus $M=R^2\cup S$, where $R^2$ includes each
element of $R$ twice. Even for this fixed $R$, there are
$\binom{m-t}{k}$ possible singleton sets. Infeasible choices
$k>m-t$ are omitted. Once the inverse root differences and the
factors $q_a$ below are available, recomputing the products
separately for each $S$ takes
\[
 O\!\left(\bigl(t(k+1)+\log(n+1)\bigr)\binom{m-t}{k}\right)
\]
field operations: there are $t$ local products, and the resulting
value must be raised to the $n$th power. We will sum these
contributions using elementary symmetric functions.

At a double root $a$, the local Todd coefficient is $f_{a,0}=1$. Since the multiset $M-a^*$ has
$n-2$ entries, its contribution is
\[
 \frac ma\prod_{b\in M-a^*}(1-b/a)^{-1}
 =m a^{n-3}
   \prod_{b\in R\setminus\{a\}}(a-b)^{-2}
   \prod_{s\in S}(a-s)^{-1}.
\]
Define
\[
 q_a=a^{n-3}\prod_{b\in R\setminus\{a\}}(a-b)^{-2}.
\]
Summing the local contributions gives
\begin{equation}\label{eq:agg-double-local}
 m^{-1}h_r(R^2,S)=\sum_{a\in R}q_a\prod_{s\in S}(a-s)^{-1}.
\end{equation}
On raising this expression to the $n$th power, each term of the
multinomial expansion is specified by an exponent vector
$\mathbf j=(j_a)_{a\in R}\in\N^R$ with
$|\mathbf j|:=\sum_{a\in R}j_a=n$. Consequently,
\[
 h_r(R^2,S)^n
 =m^n\sum_{|\mathbf j|=n}
   \binom n{(j_a)_{a\in R}}
   \left(\prod_{a\in R}q_a^{j_a}\right)
   \prod_{s\in S}\prod_{a\in R}(a-s)^{-j_a}.
\]
For this pattern, the multinomial weight of the multiset is
$\mult(M)=n!/2^t$, every repeated root has maximal multiplicity,
and $\eta(M)=t$. Its root-product factor in $\Theta(M)$ is
$\prod_{a\in R}a^2\prod_{s\in S}s$.
After fixing the mark at $1\in R$, Corollary~\ref{cor:max-anchor}
therefore gives, for this fixed repeated-root set, the contribution
\[
 \frac{n!m}{2^t t}\left(\prod_{a\in R}a^2\right)
 \sum_{|\mathbf j|=n}\binom n{(j_a)_{a\in R}}
   \left(\prod_{a\in R}q_a^{j_a}\right)P_R(\mathbf j),
\]
where
\[
 P_R(\mathbf j)=\sum_{\substack{S\subseteq\Omega\setminus R\\|S|=k}}
                  \prod_{s\in S}w_s(\mathbf j),
 \qquad
 w_s(\mathbf j)=s\prod_{a\in R}(a-s)^{-j_a}.
\]
The factor $m$ comes from $m^{1-n}m^n$. Each singleton set in
the inner sum is chosen without an ordering; no additional
factor $k!$ is needed.
Indeed, $P_R(\mathbf j)$ is the $k$th elementary symmetric
function in the values $w_s(\mathbf j)$, indexed by
$s\in\Omega\setminus R$. Hence
\begin{equation}\label{eq:agg-double-subset}
 P_R(\mathbf j)=[v^k]\prod_{s\in\Omega\setminus R}
                           (1+v w_s(\mathbf j)).
\end{equation}
The standard evaluation of this elementary symmetric function
uses $O((m-t)(k+1))$ field operations once its arguments are known.
There are $\binom{n+t-1}{t-1}$ exponent vectors. For $t\leq4$,
preparing the weights and evaluating the elementary symmetric
function for all these vectors takes
\[
 O\!\left(\binom{n+t-1}{t-1}(m+1)(n+1)\right)
\]
field operations for this fixed $R$; the powers needed in the
weights may be prepared once and reused. For fixed $n$ and $t$
with $k\geq2$, the dependence on $m$ decreases from degree $k$
in the separate evaluation to degree $1$. When $k=1$, both
calculations are linear in $m$; when $k=0$, $P_R(\mathbf j)=1$
and no singleton calculation is needed. The binomial factor
counting exponent vectors must still be taken into account
when $n$ and $t$ vary.

For example, let $n=6$, $t=2$, and $R=\{1,a\}$, where $a\ne1$.
There are $k=2$ singleton roots and seven exponent vectors
$\mathbf j=(j,6-j)$, with $0\leq j\leq6$. For each such vector,
\[
 P_{\{1,a\}}(j,6-j)
 =[v^2]\prod_{s\in\Omega\setminus\{1,a\}}
     \left(1+\frac{v s}{(1-s)^j(a-s)^{6-j}}\right).
\]
This is the elementary symmetric function of degree $2$ in the
displayed singleton weights. Combining these values with the
multinomial and root factors gives the entire $(2,2)$ class:
\[
 90m\sum_{a\in\Omega\setminus\{1\}}\frac{a^2}{(1-a)^{12}}
      \sum_{j=0}^{6}\binom6j a^{3(6-j)}
                P_{\{1,a\}}(j,6-j).
\]
Here $90=6!/(2^2\cdot2)$, and the common denominator
$(1-a)^{12}$ comes from $q_1^j q_a^{6-j}$.

In general, summing over the sets $R\subseteq\Omega$ with $\lvert R\rvert=t$ and $1\in R$ gives
\begin{equation}\label{eq:agg-double-total}
 \frac{n!m}{2^t t}
 \sum_{\substack{R\subseteq\Omega\\|R|=t,\ 1\in R}}
 \left(\prod_{a\in R}a^2\right)
 \sum_{|\mathbf j|=n}
       \binom n{(j_a)_{a\in R}}
       \left(\prod_{a\in R}q_a^{j_a}\right)P_R(\mathbf j).
\end{equation}
There are $\binom{m-1}{t-1}$ choices of $R$. Multiplying the
fixed-$R$ bound above by this number gives the complete-class
bound in Subsection~\ref{sec:computation}.

For $t=4$, the expansion is reduced further by marking a root
where $j_a$ is maximal. Each expanded term is invariant under
scaling by $\lambda$: $q_a\prod_{s\in S}(a-s)^{-1}$ has degree $-1$ in $\lambda$, and
its $n$th power cancels the degree $n$ in $\lambda$ of the root product.
Put $\tau(\mathbf j)=\#\{a\in R:j_a=\max_b j_b\}$.
In~\eqref{eq:agg-double-total}, omit the outer factor $1/t$,
restrict the inner sum to $j_1\geq j_a$ for every $a\in R$,
and divide each term by $\tau(\mathbf j)$. The marking lemma
proves this identity even when scaling has stabilizers. At
$n=12$, it retains $127$ of the $455$ exponent vectors.

For higher multiplicities, the singleton sum also contains
polynomials in additive statistics. At a triple root marked at
$1$, for example, the required statistic is
$\sum_{s\in S}(1-s)^{-1}$. A quadruple root also requires
$\sum_{s\in S}(1-s)^{-2}$. These sums arise because
\eqref{eq:todd-log} writes each local logarithmic coefficient
as a constant plus a sum of contributions from individual roots.
The exponential recurrence makes each Todd coefficient a
polynomial in those sums.

After expanding $h_r(M)^n$ and collecting the multiplicative
factors, the required calculation therefore has the form
\begin{equation}\label{eq:weighted-subset-model}
 \sum_{\substack{S\subseteq U\\|S|=k}}
       \left(\prod_{s\in S}w_s\right)
       Q\left(\sum_{s\in S}\mathbf v_s\right).
\end{equation}
Here $U$ is the set of available singleton roots, $k$ is the
number to be chosen, and $w_s$ contains the multiplicative
factors associated with the root $s$. The vector $\mathbf v_s$, with $\kappa$ components and usually of the form
$((a-s)^{-1},\dots, (a-s)^{-\kappa}) $,
records its contributions to the additive statistics; $Q$ is
the polynomial in these statistics supplied by the Todd formula
and the selected term of the multinomial expansion. For $(2^t)$,
there are no additive statistics and $Q=1$, so this expression
is the elementary symmetric function in~\eqref{eq:agg-double-subset}.

Lemma~\ref{lem:exponential-moments} evaluates
\eqref{eq:weighted-subset-model} by processing the elements of
$U$ one at a time and retaining only the required subset sizes
and polynomial coefficients. For fixed $k$ and a fixed set of
coefficient indices, its cost is linear in $|U|$. Thus it avoids
enumerating the $\binom{|U|}{k}$ subsets. The next subsection
proves this identity and gives the explicit correspondence with
the model for triple and quadruple roots.

\subsection{Finite-product evaluation and applications}\label{sec:moments}

The subset size is recorded by a variable $v$. Additional
variables $\mathbf z=(z_1,\ldots,z_{\kappa})$ record powers of the
additive statistics. The required contribution from subsets of size $k$
can then be extracted using exponential generating functions.

\begin{lemma}\label{lem:exponential-moments}
Let $k\in\N$, let $U$ be a finite set, let $w_s\in\F_p$, and let
$\mathbf v_s=(v_{s,1},\ldots,v_{s,\kappa})\in\F_p^{\kappa}$ for $s\in U$.
Let $\mathcal I\subseteq\N^{\kappa}$ be a nonempty finite downward-closed set:
if $\boldsymbol\gamma\in\mathcal I$ and
$\mathbf0\leq\boldsymbol\delta\leq\boldsymbol\gamma$
componentwise, then $\boldsymbol\delta\in\mathcal I$.
Assume that every component of every index in $\mathcal I$ is
smaller than $p$. Write $\mathbf z=(z_1,\ldots,z_{\kappa})$,
$\boldsymbol\gamma!=\prod_i\gamma_i!$, and
$\mathbf u^{\boldsymbol\gamma}=\prod_i u_i^{\gamma_i}$
for any vector $\mathbf u$.

For $0\leq\ell\leq k$ and
$\boldsymbol\gamma\in\mathcal I$,
\begin{align}
 E_{\ell,\boldsymbol\gamma}
 &=\CT\limits_{v,z_1,\ldots,z_{\kappa}}
   v^{-\ell}\mathbf z^{-\boldsymbol\gamma}
   \prod_{s\in U}\left(1+v w_s
          \exp\left(\sum_{i=1}^{\kappa} z_i v_{s,i}\right)\right)
       \label{eq:agg-exp-ct}\\
 &=\frac1{\boldsymbol\gamma!}
   \sum_{\substack{S\subseteq U\\|S|=\ell}}
       \left(\prod_{s\in S}w_s\right)
       \prod_{i=1}^{\kappa}\left(\sum_{s\in S}v_{s,i}\right)^{\gamma_i}.
       \notag
\end{align}

Expand each multivariate exponential as
$\prod_i\exp(z_i v_{s,i})$, truncate to the required indices,
and then reduce modulo $p$; the coefficients involve only the
factorials allowed by the hypothesis.
Initialize $E_{0,\mathbf0}=1$ and all other entries $0$.
Inserting a new element $s$ gives
\begin{equation}\label{eq:agg-exp-update}
 E_{\ell,\boldsymbol\gamma}\leftarrow E_{\ell,\boldsymbol\gamma}
 +w_s\sum_{\mathbf0\leq\boldsymbol\delta\leq\boldsymbol\gamma}
   \frac{\mathbf v_s^{\boldsymbol\gamma-\boldsymbol\delta}}
        {(\boldsymbol\gamma-\boldsymbol\delta)!}
   E_{\ell-1,\boldsymbol\delta}.
\end{equation}
Update $\ell=k,k-1,\ldots,1$, retaining only indices
$\boldsymbol\gamma\in\mathcal I$.
\end{lemma}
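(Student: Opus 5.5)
We prove the lemma in two parts: first the closed form in~\eqref{eq:agg-exp-ct}, then the correctness of the update~\eqref{eq:agg-exp-update}. Throughout, we work with truncated polynomials in $\mathbf z$ over $\F_p$. The product is a polynomial in $v$, and only monomials $\mathbf z^{\boldsymbol\gamma}$ with $\boldsymbol\gamma\in\mathcal I$ are ever extracted. Downward closure of $\mathcal I$ makes reduction modulo the ideal spanned by the monomials outside $\mathcal I$ compatible with multiplication, because no product of monomials outside $\mathcal I$ lands back inside it. The hypothesis $\gamma_i<p$ makes every factorial $\delta_i!$ with $\boldsymbol\delta\in\mathcal I$ invertible. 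Therefore each truncated exponential $\prod_i\sum_{\delta_i}(z_iv_{s,i})^{\delta_i}/\delta_i!$ is well defined in $\F_p$.

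For the closed form, expand the product over $s\in U$ by choosing, for each $s$, either $1$ or $vw_s\exp(\sum_iz_iv_{s,i})$. A choice corresponds to a subset $S\subseteq U$ and contributes
\[
v^{|S|}\Bigl(\prod_{s\in S}w_s\Bigr)\exp\Bigl(\sum_i z_i\sum_{s\in S}v_{s,i}\Bigr).
\]
We use the functional equation $\exp(A)\exp(B)=\exp(A+B)$. Over $\F_p$ this holds on the truncation, because the coefficient of $\mathbf z^{\boldsymbol\gamma}$ on each side is given by the binomial theorem $\sum_{\boldsymbol\delta}\mathbf a^{\boldsymbol\delta}\mathbf b^{\boldsymbol\gamma-\boldsymbol\delta}/(\boldsymbol\delta!(\boldsymbol\gamma-\boldsymbol\delta)!)=(\mathbf a+\mathbf b)^{\boldsymbol\gamma}/\boldsymbol\gamma!$, and this identity is valid whenever $\boldsymbol\gamma!$ is invertible. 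Alternatively, one may prove it over $\Z[1/(p-1)!]$ and reduce. The factors with $\gamma_i=0$ in the $\boldsymbol\gamma$-coefficient give
\[
[\mathbf z^{\boldsymbol\gamma}]\exp\Bigl(\sum_i z_iV_i\Bigr)=\frac{\mathbf V^{\boldsymbol\gamma}}{\boldsymbol\gamma!},\qquad V_i=\sum_{s\in S}v_{s,i}.
\]
Extracting the coefficient of $v^\ell\mathbf z^{\boldsymbol\gamma}$ then selects exactly the subsets with $|S|=\ell$, which yields the second line of~\eqref{eq:agg-exp-ct}.

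For the algorithm, define $E^{(U')}_{\ell,\boldsymbol\gamma}$ by the same constant term for each subset $U'\subseteq U$, and use induction on $|U'|$. The base case $U'=\emptyset$ is the product $1$, which gives $E_{0,\mathbf 0}=1$ and all other entries $0$. Adjoining a new element $s$ multiplies the product by $1+vw_s\exp(\mathbf z\cdot\mathbf v_s)$. Taking the coefficient of $v^\ell\mathbf z^{\boldsymbol\gamma}$, the term $1$ returns $E_{\ell,\boldsymbol\gamma}$. The second term returns $w_s$ times the convolution of $E_{\ell-1,\cdot}$ with the coefficients $\mathbf v_s^{\boldsymbol\gamma-\boldsymbol\delta}/(\boldsymbol\gamma-\boldsymbol\delta)!$, summed over $\boldsymbol\delta\le\boldsymbol\gamma$. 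Every such $\boldsymbol\delta$ lies in $\mathcal I$ by downward closure, so discarding indices outside $\mathcal I$ loses nothing. Indices $\ell>k$ are likewise never needed, since the recurrence only reads $\ell-1$ from $\ell$. Updating $\ell$ in the order $k,k-1,\dots,1$ ensures that the right-hand side uses the values of $E_{\ell-1,\cdot}$ from before $s$ was adjoined, which is the usual in-place subset-sum argument.

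The only delicate step is the legitimacy of the truncated exponential arithmetic in characteristic $p$. The combinatorial expansion itself is routine. This step is settled by the binomial-coefficient check above, which uses only the invertibility of $\boldsymbol\gamma!$ for $\boldsymbol\gamma\in\mathcal I$.
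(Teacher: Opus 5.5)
Your proposal is correct and follows the paper's proof. You expand the product over subsets $S\subseteq U$ and extract $[v^\ell\mathbf z^{\boldsymbol\gamma}]$; you then get the update by coefficient convolution when the factor $1+vw_s\exp(\sum_i z_iv_{s,i})$ is adjoined, and the descending order in $\ell$ keeps the cardinality-$(\ell-1)$ entries free of $s$. Your check that truncated exponentials multiply correctly in characteristic $p$ (the binomial identity, which needs only $\boldsymbol\gamma!$ invertible) spells out what the paper compresses into its last sentence. One small fix in your justification: the property you need is that the complement of $\mathcal I$ is closed under multiplication by arbitrary monomials, which is exactly downward closure, not that products of two monomials outside $\mathcal I$ stay outside.
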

\begin{proof}
In the product in~\eqref{eq:agg-exp-ct}, choosing the $v$-term
in the factors indexed by $S$ gives
\[
 v^{|S|}\left(\prod_{s\in S}w_s\right)
 \exp\left(\sum_{i=1}^{\kappa} z_i\sum_{s\in S}v_{s,i}\right).
\]
Extracting $[v^\ell\mathbf z^{\boldsymbol\gamma}]$ proves the
displayed identity. Inserting a new factor leaves the subsets
not containing $s$ unchanged. Its other term multiplies the
previous product by $v w_s\exp(\sum_i z_i v_{s,i})$.
Coefficient convolution gives~\eqref{eq:agg-exp-update}.
Descending order in $\ell$ ensures that the values at cardinality
$\ell-1$ have not yet included $s$. The hypotheses on
$\mathcal I$ ensure that every required coefficient and factorial
inverse is available.
\end{proof}

For a polynomial in $\kappa$ variables,
$Q(\mathbf z)=\sum_{\boldsymbol\gamma\in\mathcal I}
q_{\boldsymbol\gamma}\mathbf z^{\boldsymbol\gamma}$, the lemma gives
\begin{equation}\label{eq:agg-moment-contraction}
 \sum_{\substack{S\subseteq U\\|S|=k}}
   \left(\prod_{s\in S}w_s\right)
   Q\left(\sum_{s\in S}\mathbf v_s\right)
 =\sum_{\boldsymbol\gamma\in\mathcal I}
   q_{\boldsymbol\gamma}\boldsymbol\gamma! E_{k,\boldsymbol\gamma}.
\end{equation}
For $\kappa=0$, there are no moment variables, and
$E_{\ell,\mathbf0}$ is the elementary symmetric function of
degree $\ell$ in the weights $w_s$. This recovers the calculation
for the $(2^t)$ class.

For fixed $\kappa$, applying the recurrence to all indices in
$\mathcal I$ takes
\begin{equation}\label{eq:moment-cost}
 O\left(|U|(k+1)\sum_{\boldsymbol\gamma\in\mathcal I}
                              \prod_{i=1}^{\kappa}(\gamma_i+1)\right)
\end{equation}
field operations and requires $O((k+1)|\mathcal I|)$ working
storage. Indeed, for each $\boldsymbol\gamma$ there are
$\prod_i(\gamma_i+1)$ indices
$\boldsymbol\delta\leq\boldsymbol\gamma$ in the update.
The monomials in each new factor can be prepared within the
same bound. The final sum in~\eqref{eq:agg-moment-contraction}
uses $O(|\mathcal I|)$ further operations. In the applications
below, $\kappa$ is $1$ or $2$.

We now apply the lemma to patterns containing a triple or
quadruple root. For a repeated root $a$, put
\[
 \alpha=\frac{m+2n-5}{2},\qquad
 \beta=\frac{m^2+3m(n-3)+3n^2-18n+26}{6},\qquad
 \sigma_{a,j}=\sum_{b\in M-a^*}\frac1{(a-b)^j}.
\]
The coefficients required for multiplicities $2$, $3$, and $4$
are respectively
\begin{align}
 f_{a,0}&=1 &&(\mu_a=2),\label{eq:agg-f0}\\
 f_{a,1}&=-\alpha+a\sigma_{a,1} &&(\mu_a=3),\label{eq:agg-f1}\\
 f_{a,2}&=\beta-\alpha a\sigma_{a,1}
       +\frac{a^2}{2}(\sigma_{a,1}^2+\sigma_{a,2})
                                      &&(\mu_a=4).\label{eq:agg-f2}
\end{align}
Indeed, with $M'=M-a^*$,
\[
 \mathcal S_1(M'/a)
 =\sum_{b\in M'}\frac{b}{a-b}=a\sigma_{a,1}-(n-\mu_a),\qquad
 \mathcal S_2(M'/a)=\frac{a^2\sigma_{a,2}-a\sigma_{a,1}}2.
\]
Substitution in~\eqref{eq:todd-small} gives these formulas.
In particular,~\eqref{eq:agg-f0} is the local coefficient used
in~\eqref{eq:agg-double-local}.

Consider first the pattern $(3)$, with the triple root marked
at $1$. Put $k=n-3$ and $\xi_s=(1-s)^{-1}$. The local Todd
formula gives
\[
 m^{-1}h_r(1^3,S)
 =\left(\alpha-\sum_{s\in S}\xi_s\right)\prod_{s\in S}\xi_s.
\]
Consequently, the class contribution is $n!m/6$ times
\eqref{eq:weighted-subset-model}, with the following choices:
\[
 U=\Omega\setminus\{1\},\qquad k=n-3,\qquad \kappa=1,
 \qquad w_s=s \xi_s^n,\quad v_s=\xi_s,\quad Q(X)=(\alpha-X)^n.
\]
In this case the lemma computes the coefficients
\[
 E_{k,\ell}=[v^k z^\ell]
       \prod_{s\in\Omega\setminus\{1\}}
           (1+v s \xi_s^n e^{z \xi_s}),\qquad 0\leq\ell\leq n.
\]
Expanding $(\alpha-X)^n$ and applying
\eqref{eq:agg-moment-contraction} gives the complete contribution
\[
 \frac{n!m}{6}\sum_{\ell=0}^{n}
       (-1)^\ell\binom n\ell\alpha^{n-\ell}\ell!E_{n-3,\ell}.
\]
Thus the weighted sums of $1$, $\sum \xi_s$, $(\sum \xi_s)^2$,
and the higher powers through degree $n$ are obtained from
one coefficient array. The factor $\ell!$ converts its exponential
coefficients to the ordinary powers in $Q$.

The same calculation includes double roots alongside the
triple root. Consider the pattern $(3,2^\nu)$ with
$0\leq\nu\leq2$.
Scale the triple root to $1$ and write $R=\{1\}\cup\calD$,
where $\calD$ consists of $\nu$ double roots.
Put $k=n-3-2\nu$ and
\[
 L=\prod_{b\in\calD}(1-b)^{-2},\qquad
 c=\alpha-2\sum_{b\in\calD}(1-b)^{-1},\qquad
 q_b=b^{n-3}(b-1)^{-3}
       \prod_{c'\in\calD\setminus\{b\}}(b-c')^{-2}.
\]
Equations~\eqref{eq:agg-f0} and~\eqref{eq:agg-f1} give
\[
 m^{-1}h_r(1^3,\calD^2,S)
 =L\left(c-\sum_{s\in S}\frac1{1-s}\right)
      \prod_{s\in S}(1-s)^{-1}
   +\sum_{b\in\calD}q_b\prod_{s\in S}(b-s)^{-1}.
\]
For $\mathbf j=(j_1,(j_b)_{b\in\calD})$ with $|\mathbf j|=n$, define
\[
 w_s(\mathbf j)=s(1-s)^{-j_1}\prod_{b\in\calD}(b-s)^{-j_b},
 \qquad
 \Phi_{\calD}(\mathbf j)
 =\sum_{\substack{S\subseteq\Omega\setminus R\\|S|=k}}
   \left(\prod_{s\in S}w_s(\mathbf j)\right)
   \left(c-\sum_{s\in S}(1-s)^{-1}\right)^{j_1}.
\]
For fixed $\calD$ and $\mathbf j$, this is
\eqref{eq:weighted-subset-model} with $U=\Omega\setminus R$,
the displayed weights $w_s(\mathbf j)$, one statistic
$v_s=(1-s)^{-1}$, and $Q(X)=(c-X)^{j_1}$. The moment dimension
is $\kappa=1$, regardless of the number $\nu$ of double roots.
With these weights and statistic, Lemma~\ref{lem:exponential-moments}
and the binomial expansion give
\[
 \Phi_{\calD}(\mathbf j)
 =\sum_{\ell=0}^{j_1}(-1)^\ell\binom{j_1}{\ell}
       c^{j_1-\ell}\ell!E_{k,\ell}.
\]
For example, the mixed pattern $(3,2)$ has $\calD=\{b\}$,
$k=n-5$, and $j_b=n-j_1$. Its parameters are explicitly
\[
 U=\Omega\setminus\{1,b\},\qquad
 c=\alpha-\frac{2}{1-b},\qquad
 w_s(\mathbf j)=\frac{s}{(1-s)^{j_1}(b-s)^{n-j_1}}.
\]
For each $j_1$, the same one-variable calculation sums all
$\binom{m-2}{n-5}$ choices of singleton roots.
The full contribution of $(3,2^\nu)$ is
\[
 \frac{n!m}{6\,2^\nu}
 \sum_{\substack{\calD\subseteq\Omega\setminus\{1\}\\|\calD|=\nu}}
 \left(\prod_{b\in\calD}b^2\right)
 \sum_{|\mathbf j|=n}
   \binom n{j_1,(j_b)_{b\in\calD}}
   L^{j_1}\left(\prod_{b\in\calD}q_b^{j_b}\right)
   \Phi_{\calD}(\mathbf j).
\]
The unique triple root is the eligible mark, so there is no
division by the number of double roots. The required moment
degrees are at most $n$.

For the pattern $(4)$, mark the unique repeated root at $1$,
put $k=n-4$, and let $\xi_s=(1-s)^{-1}$. Define
\[
 Q(X,Y)=\left(\beta-\alpha X+\frac{X^2+Y}{2}\right)^n.
\]
Equation~\eqref{eq:agg-f2} gives the complete contribution
\[
 \frac{n!m}{24}
 \sum_{\substack{S\subseteq\Omega\setminus\{1\}\\|S|=k}}
   \left(\prod_{s\in S}s \xi_s^n\right)
   Q\left(\sum_{s\in S}\xi_s,\sum_{s\in S}\xi_s^2\right).
\]
The model now has $U=\Omega\setminus\{1\}$, $k=n-4$,
$\kappa=2$, $w_s=s \xi_s^n$, $\mathbf v_s=(\xi_s,\xi_s^2)$, and
the polynomial $Q$ above. The two components of $\mathbf v_s$
record the contributions to $\sum \xi_s$ and $\sum \xi_s^2$.
Writing $Q(X,Y)=\sum_{i,j}q_{i,j}X^iY^j$, the contribution is
\[
 \frac{n!m}{24}\sum_{i+2j\leq2n}q_{i,j}\,i!j!E_{n-4,(i,j)},
\]
where
\[
 E_{k,(i,j)}=[v^k z_1^i z_2^j]
       \prod_{s\in U}(1+v s \xi_s^n e^{z_1\xi_s+z_2\xi_s^2}).
\]
Only indices $(i,j)$ satisfying $i+2j\leq2n$ are needed:
assigning weights $1$ and $2$ to $X$ and $Y$ gives $Q$
weighted degree $2n$. This index set is downward-closed,
and $p>2n$ makes all factorials in~\eqref{eq:agg-exp-update}
invertible.

The implementation uses these constructions for the eight patterns
\begin{equation}\label{eq:agg-patterns}
 \mathcal G=\{(2),(3),(4),(2,2),(3,2),(2,2,2),
                    (3,2,2),(2,2,2,2)\}.
\end{equation}
Only multiplicities greater than $1$ are shown; the remaining
entries are distinct singleton roots. Patterns whose parts sum
to more than $n$ are omitted. For a nonempty unordered repeated-multiplicity
pattern $\boldsymbol\lambda=(\lambda_1,\ldots,\lambda_t)$, with $\lambda_i\geq2$,
put $\delta(\boldsymbol\lambda)=\sum_i(\lambda_i-1)$.
The selection includes all such patterns with
$\delta(\boldsymbol\lambda)\leq3$ and two with
$\delta(\boldsymbol\lambda)=4$. A pattern has
$n-\delta(\boldsymbol\lambda)$ distinct roots, so fixing a marked
root leaves an assignment count of degree
$n-\delta(\boldsymbol\lambda)-1$ in $m$. The selection therefore
removes the patterns with the largest enumeration degrees,
while using local Todd coefficients of degrees 0, 1, or 2.

Each class formula follows from the local Todd identity,
the multinomial expansion, and the finite-product calculation.
The marking lemma supplies its scaling weight. Every pattern
outside $\mathcal G$ is evaluated by the general Todd
recurrence~\eqref{eq:todd-exp-recurrence} and summed according to
Corollary~\ref{cor:max-anchor}; multisets with all roots
distinct contribute $0$. Thus the class sums and the remaining
multisets together give the complete root average. Further
patterns can be treated by the same exponential-moment lemma, but
their usefulness depends on the number and degrees of the
required moment variables and the cost of the resulting arrays.

\section{Exact algorithms, complexity, and implementation}\label{sec:algorithms}

We assemble the formulas of Section~\ref{sec:method} into an exact
computation. A combinatorial bound supplies the stopping criterion
for Chinese remainder reconstruction. We then analyze the cost of
the modular sums, describe their implementation, and illustrate
the passage from local Todd coefficients to a complete series.

\subsection{Exact modular algorithms}\label{sec:crt}

Integer recovery requires an upper bound on the desired count.
Choosing all but the last row gives the following bound.

\begin{lemma}\label{lem:bound}
For every $n\geq2$ and $r\geq0$,
\begin{equation}\label{eq:bound}
 0\leq H_n(r)\leq U_n(r):=
 \binom{r+n-1}{n-1}^{n-1}.
\end{equation}
\end{lemma}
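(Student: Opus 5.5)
The plan is to prove the upper bound by an injection: send each counted matrix to its first $n-1$ rows. The lower bound $H_n(r)\geq0$ needs no argument, since $H_n(r)$ is a cardinality.

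For the injection, let $A=(a_{ij})\in\N^{n\times n}$ have all row and column sums equal to $r$. The last row is determined by the others, because for each column $j$
\[
 a_{nj}=r-\sum_{i=1}^{n-1}a_{ij}.
\]
So the map $A\mapsto(a_{ij})_{1\leq i\leq n-1,\,1\leq j\leq n}$ is injective on the set counted by $H_n(r)$.

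Next I bound the size of the image. Each of the first $n-1$ rows is a vector in $\N^n$ with entry sum $r$, that is, a weak composition of $r$ into $n$ parts. By stars and bars there are exactly $\binom{r+n-1}{n-1}$ such rows. Hence the image lies in a set of size $\binom{r+n-1}{n-1}^{n-1}$, and injectivity gives $H_n(r)\leq U_n(r)$.

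There is no real obstacle here. The only point to watch is that the image need not be the whole set of $(n-1)$-tuples of rows, since the recovered last row may have negative entries. This only makes the bound weaker, not invalid. For $n=2$ the bound reads $H_2(r)\leq r+1$, which holds with equality, so the argument is consistent with the case $n\geq2$ in the statement.
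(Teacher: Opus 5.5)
Your proposal is correct and is essentially the paper's own proof. Both arguments fix the first $n-1$ rows, each one of $\binom{r+n-1}{n-1}$ weak compositions of $r$, and note that the column sums force the last row. So each choice of the first $n-1$ rows yields at most one counted matrix.
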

\begin{proof}
A row of sum $r$ has $\binom{r+n-1}{n-1}$ nonnegative integer realizations.
Choose the first $n-1$ rows arbitrarily. Each column sum then uniquely
determines the last entry in that column; this either gives a valid last
row or produces a negative entry. Thus each choice of the first $n-1$
rows gives at most one matrix counted by $H_n(r)$.
\end{proof}

Choose distinct primes satisfying~\eqref{eq:prime}, compute the corresponding
residues, and continue until their product $P$ satisfies
\begin{equation}\label{eq:crtbound}
 P>U_n(r).
\end{equation}
The Chinese remainder theorem gives a unique representative $C\in[0,P)$.
Because $H_n(r)\in[0,U_n(r)]$, it follows that $C=H_n(r)$. A further prime,
excluded from $P$, provides an implementation check. It is not needed for
mathematical uniqueness and is not a substitute for~\eqref{eq:crtbound}.
The existence of arbitrarily many primes $1\pmod m$ is standard; see,
for example, the treatment of Dirichlet's theorem in~\cite{Apostol}.
For $m=1$ any sufficiently large prime is admissible.

We now describe how intermediate calculations are reused during
the enumeration. For each remaining multiplicity pattern, fix a maximal part at the
root $1$ and arrange the remaining parts in nonincreasing order.
Assign distinct roots to these parts; within each block of equal
parts their root indices increase. At each step, retain only indices
leaving enough unused larger roots to complete the current
equal-multiplicity block. Thus the repeated-root groups
precede the singleton groups.  Every multiset in $\mathcal M_{\max}$ has a unique
such assignment, and every partial assignment is a prefix of it.

For a repeated root $a$ already assigned to a part $\mu_a$, let
$N_a$ contain all currently assigned entries
other than the $\mu_a$ copies of $a$, with their multiplicities.
Store
\begin{align*}
 P_a&=\prod_{b\in N_a}(1-b/a)^{-1},\\
 c_{a,j}&=j\left(-r\mathbf1_{j=1}+(\mu_a-m^j)\beta_j
                   +\mathcal S_j(N_a/a)\right),
                       \qquad 1\leq j\leq \mu_a-2.
\end{align*}
When a group $(b,\mu_b)$ is appended, every existing repeated root
updates by
\[
 P_a\leftarrow P_a(1-b/a)^{-\mu_b},\qquad
 c_{a,j}\leftarrow c_{a,j}+\mu_b j C_j(y_{b/a}).
\]
If $\mu_b\geq2$, initialize the new root's arrays from the earlier
groups by the same displayed definitions.  All multiplicities
and coefficient ranges have been fixed before this traversal.
Induction on the number of assigned groups proves the stored
identities at every prefix.  At a leaf,
$C_a=(-1)^{\mu_a}mP_a/a$ is the scalar in
\eqref{eq:todd-prefactor}, $c_{a,j}=j\ell_{a,j}$, and the exponential
recurrence becomes
\[
 f_{a,j}=\frac1j\sum_{i=1}^{j}c_{a,i}f_{a,j-i}.
\]
Thus common prefixes share the construction of all local logarithms
and prefactors, while the terminal calculation is exactly the Todd
formula.  Restoring the saved parent state on return makes the
updates independent of the traversal order.

The sum of $\mu_a-1$ over repeated roots is at most $n$.
Along a complete root-to-leaf path, updating the existing arrays
and initializing all new ones therefore uses $O(n^2)$ field
operations in total; the leaf exponentiation has the same bound.
Summing the path bounds over the leaves is an upper bound for
the whole traversal because every enumerated prefix is extendible.
Consequently prefix reuse preserves the $O(n^2R(n,m))$ worst-case
bound while eliminating repeated preparation of common prefixes.
Here $R(n,m)$ is the number of multisets in $\mathcal M_{\max}$ with multiplicity patterns outside $\mathcal G$,
counted explicitly in Subsection~\ref{sec:computation}.

The implementation selects $p>2n$ and $m\mid p-1$.
These conditions make every factorial required by
Lemma~\ref{lem:exponential-moments} invertible for the selected
classes. The finite-field tables are prepared once per modulus
and shared among workers;
each worker retains its own traversal state and partial sum.

\medskip
\Needspace{10\baselineskip}
\noindent\emph{Algorithm 1: Exact evaluation of $H_n(r)$ by Todd coefficients.}
\begin{enumerate}[label=(S\arabic*),ref=S\arabic*]
 \item\label{step:init} If $n=1$, return $1$; if $n=2$, return $r+1$; if $r=0$,
 return $1$. Otherwise set $m=r+1$, compute $U_n(r)$, and initialize
 the Chinese remainder modulus $P=1$ and representative $C=0$.
 \item\label{step:prime} Select an unused prime $p>2n$ with $m\mid p-1$
 and construct $\Omega=\{1,\omega,\ldots,\omega^{m-1}\}$.
 Index the roots by powers of $\omega$ and precompute factorials,
 inverse differences, and the logarithmic ratio data $C_j(y_c)$
 from Subsection~\ref{sec:dd}. Prepare the multiplicity
 factors used in the prefix updates, and initialize $S=G=0$.
 \Needspace{10\baselineskip}
 \item\label{step:sum} Add the complete normalized contributions of
 the feasible patterns in the implemented set $\mathcal G$ of
 \eqref{eq:agg-patterns} to $G$, using
 Subsections~\ref{sec:agg-classes} and~\ref{sec:moments}.
 For every pattern outside
 $\mathcal G$, assign a maximal multiplicity to the marked root $1$ and traverse the distinct-root
 assignments with ordered equal-multiplicity blocks. For each leaf:
 \begin{enumerate}[label=(\alph*)]
  \item Use the cached weighted logarithmic coefficients
  $c_{a,j}=j\ell_{a,j}$ and prefactors to compute the Todd
  coefficients by~\eqref{eq:todd-exp-recurrence}, and sum the
  contributions in~\eqref{eq:todd-local} to obtain $h_r(M)$.
  \item Compute $\mult(M)$, $\eta(M)$, and
  $\Theta(M)=(\prod_a a^{\mu_a})h_r(M)^n$.
  \item Add $\mult(M)\Theta(M)/\eta(M)$ to $S$ in $\F_p$.
 \end{enumerate}
 \item\label{step:crt} Set $\rho_p=m^{1-n}S+G$. Combine $C\pmod P$ and
 $\rho_p\pmod p$ by the Chinese remainder theorem, replacing $P$ by $Pp$
 and $C$ by the least nonnegative representative modulo $Pp$.
 \item\label{step:stop} Repeat steps (\ref{step:prime})--(\ref{step:crt})
 until $P>U_n(r)$. If a checking prime is requested, repeat
 steps (\ref{step:prime}) and (\ref{step:sum}) at one further prime
 and compare $m^{1-n}S+G$ with the reduction of $C$; do not use
 this prime in reconstruction. Return $C$.
\end{enumerate}

\noindent\emph{Algorithm 2: Complete Ehrhart series.}
For $n\geq3$, use Algorithm 1 for $0\leq r\leq K$.
Apply~\eqref{eq:transform}, reflect the coefficients
by~\eqref{eq:pal}, and return~\eqref{eq:series}.
Formula~\eqref{eq:binomial} gives the counting polynomial if required.
An additional evaluation at $K+1$ tests a value unused in reconstruction.
The cases $n=1,2$ are given by their elementary formulas.

\begin{theorem}\label{thm:correctness}
Algorithm 1 returns $H_n(r)$ exactly, and Algorithm 2 returns the
complete Ehrhart series of $\B_n$.
\end{theorem}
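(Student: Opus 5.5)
The argument is an assembly of results already proved, organized around one claim: for every prime $p$ selected in step (\ref{step:prime}), the value $\rho_p=m^{1-n}S+G$ formed in step (\ref{step:crt}) equals $H_n(r)\bmod p$. Once this holds, the Chinese remainder step and the bound of Lemma~\ref{lem:bound} give the integer exactly, and Proposition~\ref{prop:reconstruct} gives the series. The trivial branches of step (\ref{step:init}) are the known formulas $H_1(r)=1$, $H_2(r)=r+1$, and $H_n(0)=1$. We may therefore assume $n\ge3$ and $r\ge1$, so $m=r+1\ge2$.

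\emph{Correctness modulo $p$.}
\begin{enumerate}
\item The selected prime satisfies $p>2n$ and $m\mid p-1$. Hence \eqref{eq:prime} holds, and Theorem~\ref{thm:filter} and Corollary~\ref{cor:max-anchor} apply. So $H_n(r)\equiv m^{1-n}\sum_{M\in\calM_{\max}}\mult(M)\Theta(M)/\eta(M)$.
\item Split $\calM_{\max}$ by repeated-multiplicity pattern. Each pattern class is scaling invariant, so by the last sentence of Corollary~\ref{cor:max-anchor} the sum splits into per-pattern contributions. The all-distinct pattern contributes $0$ by \eqref{eq:distinct-zero}.
\item For patterns outside $\mathcal G$, check that the traversal visits each multiset of $\calM_{\max}$ in that pattern exactly once. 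This holds because the canonical assignment with increasing indices inside equal-multiplicity blocks is unique.
\item At each leaf, the cached $P_a$ and $c_{a,j}$ coincide with the prefactor \eqref{eq:todd-prefactor} and with $j\ell_{a,j}$ from \eqref{eq:todd-log}. This follows by induction on the number of appended groups, using the additivity of $\mathcal S_j$ and the multiplicativity of $P_a$. Then Proposition~\ref{thm:todd-local} and the recurrence \eqref{eq:todd-exp-recurrence} give the correct $h_r(M)$, so $S$ is the correct partial sum.
\item For patterns in $\mathcal G$, the class formulas of Subsections~\ref{sec:agg-classes} and~\ref{sec:moments} were derived from the local Todd identity, the multinomial expansion, and Lemma~\ref{lem:eligible-marking} with the appropriate eligible sets. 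The eligible sets are the repeated roots for $(2^t)$ together with the refined $\tau(\mathbf j)$ marking for $t=4$, and the unique triple or quadruple root otherwise. These formulas already include the factor $m^{1-n}$, so $G$ is the normalized contribution. Evaluating them with Lemma~\ref{lem:exponential-moments} requires factorial inverses of indices at most $2n<p$, which exist.
\end{enumerate}
Adding the two parts gives $\rho_p\equiv H_n(r)\pmod p$.

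\emph{Integer recovery and the series.} The primes are distinct, and an unlimited supply exists by Dirichlet's theorem, so the loop terminates with $P>U_n(r)$. The CRT representative $C\in[0,P)$ agrees with $H_n(r)$ modulo $P$, and $0\le H_n(r)\le U_n(r)<P$, hence $C=H_n(r)$. For Algorithm 2, Proposition~\ref{prop:reconstruct} shows that $H_n(0),\dots,H_n(K)$ determine $a_0,\dots,a_K$ via \eqref{eq:transform}. Palindromicity \eqref{eq:pal} supplies the remaining coefficients, and \eqref{eq:series} is then the series. The optional checking prime and the extra evaluation at $K+1$ do not affect the output.

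The main obstacle is step 5, the consistency of the weights in the class sums. One must confirm that each class in $\mathcal G$, with its marking convention, yields exactly the restriction of \eqref{eq:anchor-max} to that class. The class formulas use different eligible sets than the traversal, which marks maximal multiplicities. I would handle this by applying \eqref{eq:eligible-family} separately to each scaling-invariant class. Since the left side of \eqref{eq:eligible-family} does not depend on the choice of $E$, any valid marking gives the same contribution, so the different conventions agree. A secondary point needing care is that the class sums and the traversal partition $\calM_{\max}$ with no overlap.
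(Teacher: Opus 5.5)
Your proposal is correct and follows essentially the same assembly as the paper's proof. The steps are the same: the root filter with maximal-multiplicity marking (Corollary~\ref{cor:max-anchor}), a disjoint split into the class sums for $\mathcal G$ and the residual traversal (where the prefix invariant gives the Todd values), then CRT with the bound $U_n(r)$, and Proposition~\ref{prop:reconstruct} for the series. Your worry about differing eligible sets is largely moot, because every class formula for $\mathcal G$ already marks a root of maximal multiplicity; only the $t=4$ exponent-level $\tau(\mathbf j)$ marking differs, and that is justified by Lemma~\ref{lem:eligible-marking} applied to the expanded terms. In any case, your marking-independence argument covers it.
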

\begin{proof}
The elementary branches are immediate. In every other case,
Theorem~\ref{thm:filter} expresses the required count modulo $p$
as a finite root average. The local formula in
Subsection~\ref{sec:dd} computes every $h_r(M)$ exactly and makes
the all-distinct terms vanish. Corollary~\ref{cor:max-anchor}
accounts for permutations and scaling, including nontrivial
stabilizers. The disjoint class sums in
Subsections~\ref{sec:agg-classes} and~\ref{sec:moments} give
their complete normalized
contributions. The ordered equal-multiplicity blocks enumerate
every remaining multiset in $\mathcal M_{\max}$ exactly once, and the prefix
invariant proves that its cached local value is the Todd value.
Their union is the complete root sum, so
step (\ref{step:crt}) obtains $H_n(r)\pmod p$.
The bound~\eqref{eq:bound} and the stopping condition
$P>U_n(r)$ make the retained integer unique. Arbitrarily many
admissible primes exist, so this stopping condition is eventually
met. Proposition~\ref{prop:reconstruct} then proves Algorithm 2.
\end{proof}

\subsection{Complexity and computational organization}\label{sec:computation}

Additions, multiplications, and divisions in $\F_p$ each count as
one field operation. Root indices are manipulated modulo $m$;
this integer bookkeeping is separate from the field-operation
count. We analyze the modular calculation once a primitive
$m$th root has been obtained. The logarithmic coefficients
in~\eqref{eq:todd-log} use the formulas
of~\cite[Equations~(6) and (7)]{XZZ25}. Their Stirling coefficients
require $O(n^2)$ operations; Horner evaluation for all root ratios
requires $O(mn^2)$. Caching the multiplicity factors and weighted
logarithmic coefficients fits within the same operation bound and
occupies $O(mn^2)$ field elements. The implementation also
prepares $O(m^2)$ inverse root differences.

For a repeated root $a$, the exponential recurrence
in~\cite[Lemma~4]{XZZ25} costs
$O((\mu_a-1)^2)$ operations; its characteristic condition holds
because $\mu_a-1<n<p$. Since
\[
 \sum_{\mu_a\geq2}(\mu_a-1)\leq n,\qquad
 \sum_{\mu_a\geq2}(\mu_a-1)^2\leq n^2,
\]
the complete local evaluation at a leaf costs $O(n^2)$.
The prefix argument in Subsection~\ref{sec:crt} bounds the
preparation over all extendible paths by the same order per leaf.
It remains to count these leaves and the work of the class sums.

Let $\boldsymbol{\lambda}=(\lambda_1,\ldots,\lambda_t)$ be a nonempty unordered repeated-multiplicity
pattern, with $\lambda_i\geq2$, and put
$k=n-\sum_i\lambda_i$.  Let $\nu_j$ be the number of entries equal to
$j$, and $q=\max_i\lambda_i$.  The exact number of multisets with an
eligible maximal-multiplicity mark fixed at $1$ is
\begin{equation}\label{eq:agg-pattern-count}
 N_{\boldsymbol{\lambda}}(n,m)=\binom{m-1}{t-1}\binom{m-t}{k}
       \frac{(t-1)!}{(\nu_q-1)!\prod_{j\ne q}\nu_j!}.
\end{equation}
Binomial coefficients are $0$ for infeasible choices.  Define the
residual count
\begin{equation*}
 R(n,m)=\sum_{\substack{\boldsymbol{\lambda}\notin\mathcal G\\
                         \lambda_i\geq2,\ \sum_i\lambda_i\leq n}}
                  N_{\boldsymbol{\lambda}}(n,m).
\end{equation*}
This is the number of individual multiset evaluations, not the total
arithmetic work.  The cost of the class sums must also be included.

Applying~\eqref{eq:moment-cost} to the choices of weights and
statistics in Subsections~\ref{sec:agg-classes} and~\ref{sec:moments}
gives the following conservative bounds, which include
the exponential-moment calculations and the
preparation of the corresponding weights:
\begin{center}
\begin{tabular}{@{}ll@{}}
\toprule
Repeated pattern & Field operations for its complete contribution\\
\midrule
$(2^t)$, $1\leq t\leq4$ &
$O\!\left(\binom{m-1}{t-1}\binom{n+t-1}{t-1}(m+1)(n+1)\right)$\\[2pt]
$(3,2^\nu)$, $0\leq\nu\leq2$ &
$O\!\left(\binom{m-1}{\nu}\binom{n+\nu}{\nu}(m+1)(n+1)^3\right)$\\[2pt]
$(4)$ & $O\!\left((m+1)(n+1)^5\right)$\\
\bottomrule
\end{tabular}
\end{center}
In the first bound, the binomial coefficient counting exponent
assignments may be replaced by the number retained by
maximum-exponent marking.  Denote the sum
of the bounds for the feasible patterns in the implemented set
$\mathcal G$ by $G(n,m)$.
With the cached ratio--multiplicity tables and a dense table of inverse
root differences, a complete modular count requires
\begin{equation*}
 O\bigl(m^2+mn^2+n^2R(n,m)+G(n,m)\bigr)
 \quad\text{field operations}.
\end{equation*}
Moment arrays may be reused between root sets and exponent
assignments; their largest storage bound is $O(n^3)$ for the
quadruple-root class.  The implementation also stores $J$
work items, each specifying a feasible residual multiplicity pattern
and the first assigned root other than $1$ (or the all-equal case).  Every work item
has a completion and distinct items have disjoint leaves, so $J\leq
R(n,m)$. The class with four double roots also stores
$\binom{m-1}{3}$ root triples for scheduling. Including this list,
the total serial working storage is
$O(m^3+mn^2+n^3+nJ)$ field elements and machine words. Each concurrent
worker needs its own moment and traversal arrays; the prepared
tables and work-item lists are shared. The field work for the initial
pattern weights is $O(nR(n,m))$ and fits within the stated modular
bound.  Generating multiplicity partitions also uses integer
bookkeeping: a conservative bound, including work-item preparation,
is $O(n^2\mathfrak p(n)+nJ+m^3)$, where
$\mathfrak p(n)$ is the number of integer partitions of $n$.
For fixed $n$, $J=O_n(m)$, so this scheduling does not change the
fixed-order bounds below.

To describe the dependence on $m$ for fixed order, write
$\delta(\boldsymbol{\lambda})=\sum_i(\lambda_i-1)$.  Formula~\eqref{eq:agg-pattern-count}
has degree $n-\delta(\boldsymbol{\lambda})-1$ in $m$ for each feasible pattern.
All patterns with $\delta\leq3$ occur in $\mathcal G$.
Therefore $R(n,m)=O_n(m^{n-5})$ for $n\geq5$, while $R(n,m)=0$
for $n\leq4$.  More explicitly, the remaining defect-$4$ patterns
are $(5)$, $(4,2)$, and $(3,3)$, so for $n\geq6$,
\[
 R(n,m)=\left(\frac1{(n-5)!}+\frac2{(n-6)!}\right)m^{n-5}
                       +O_n(m^{n-6}).
\]
Also $R(5,m)=1$.  Since $G(n,m)=O_n(m^4)$, the modular bound is
\begin{equation*}
 O_n\bigl(m^{n-5}+m^4\bigr)\qquad(n\geq5).
\end{equation*}
This fixed-order statement does not give a polynomial bound in $n$.
Let $L_r$ be the number of reconstruction primes for $H_n(r)$.
The modular part of complete series
reconstruction has cost
\begin{equation*}
 O\left(\sum_{r=1}^{K}L_r\left(
        (r+1)^2+(r+1)n^2+n^2R(n,r+1)+G(n,r+1)\right)\right).
\end{equation*}
Prime selection, primitive-root construction, and integer CRT are
additional costs. For a final CRT modulus $P_r$, put
$\chi_r=\lceil\log_2P_r\rceil+1$. Incremental reconstruction with
classical integer arithmetic has the conservative bound
$O(L_r\chi_r^2)$ bit operations. A checking prime adds one modular
evaluation, and an extra sample extends the sum to $K+1$.
The numerator transform uses $O(K^2)$ exact integer arithmetic
operations; the cases $n=1,2$ use their elementary formulas.

The program supports $3\leq n\leq12$, with $0\leq r\leq56$
for Birkhoff counts and $0\leq r\leq50$ for World Cup counts.
It evaluates all residual local coefficients by the Todd
logarithm--exponential recurrence. Fixed field tables are shared;
independent repeated-root assignments have worker-local moment
arrays, traversal states, and modular partial sums. Scalar class
results and residual partial sums are combined before CRT.
Local identities can also be checked independently by extracting
coefficients from $\prod_{x\in M}(1-xu)^{-1}$.

\Needspace{7\baselineskip}
\subsection{A complete calculation at order $3$}\label{sec:example}

We illustrate Theorem~\ref{thm:anchor} in the maximal-multiplicity
form of Corollary~\ref{cor:max-anchor}, including the local Todd
evaluation and integer recovery, and then reconstruct the series.
Take $n=3$, $r=2$, $m=3$, and $p=13$. The root set is
$\Omega=\{1,3,9\}$. By the cancellation and maximum-multiplicity
marking, the relevant multisets are
$(1,1,1)$, $(1,1,3)$, and $(1,1,9)$.
For each of them $t(M)=\eta(M)=1$, so the two marking formulas
have the same weights in this example.

For $M=(1,1,1)$ the repeated root $1$ has $\mu_1=3$, so only a
degree-$1$ Todd coefficient is needed. With the normalized kernel $T(s)$,
\[
 h_2(1,1,1)
 =-3[s]\,e^{-2s}\frac{T(s)^3}{T(3s)}.
\]
Since $[s]\log T(s)=-1/2$, the logarithm of the normalized
factor has coefficient $-2+(3-3)(-1/2)=-2$. Its exponential
therefore has the same linear coefficient, and $h_2(1,1,1)=6$.
For $M=(1,1,a)$ with $a\in\{3,9\}$ the double root contributes
only a degree-$0$ coefficient. Thus
\[
 h_2(1,1,a)=\frac{3}{1-a}.
\]
The simple root contributes $0$. The complete data in $\F_{13}$ are
\begin{center}
\begin{tabular}{@{}lrrrr@{}}
\toprule
$M$ & $h_2(M)$ & $\Theta(M)$ & $\mult(M)$ & $\eta(M)$\\
\midrule
$(1,1,1)$ & $6$ & $8$ & $1$ & $1$\\
$(1,1,3)$ & $5$ & $11$ & $3$ & $1$\\
$(1,1,9)$ & $11$ & $6$ & $3$ & $1$\\
\bottomrule
\end{tabular}
\end{center}
Consequently,
\[
 H_3(2)=3^{-2}(8+3\cdot11+3\cdot6)=8\pmod{13}.
\]
The analogous residues at primes $7$ and $19$ are $0$ and $2$.
Here $U_3(2)=\binom42^2=36$. Since $7\cdot13=91>36$, the
first two residues recover $H_3(2)=21$ uniquely; the unused prime
$19$ confirms it.

Finally, $d=4$, $s=2$, and the numerator is $1+a_1z+z^2$.
The values $H_3(0)=1$ and $H_3(1)=6$ give $a_1=6-5=1$, hence
\[
 \Ehr_{\B_3}(z)=\frac{1+z+z^2}{(1-z)^5},\qquad
 H_3(r)=\frac{(r+1)(r+2)(r^2+3r+4)}8.
\]
The independently computed value $H_3(2)=21$ agrees with this
polynomial and was not used to determine the numerator.

\subsection{Computational results and normalized volumes}\label{sec:parameters}

The C++ implementation carries out the finite-field calculations,
integer reconstruction, and numerator recovery described above.
Complete-series measurements use the full reconstruction range for
each order and include the arithmetic checks specified with the
timings.

The measurements in Table~\ref{tab:native-times} were obtained on
Windows~11 on a system equipped with the AMD Ryzen 7 PRO 4750U processor, using eight worker threads, Clang~15 with \texttt{-O3},
and Python~3.12 for integer reconstruction. Each entry is a fresh
run from the start of a separate process through writing its result.
For World Cup counts, $K_0$ is the reconstruction cutoff defined
in Section~\ref{sec:worldcup}.
It includes all samples $0\leq r\leq K$ (respectively $K_0$),
one further sample at $K+1$ (respectively $K_0+1$), and an additional
checking prime for every nontrivial sample, as well as table preparation,
class sums, residual evaluation, CRT, and numerator recovery.
No saved numerical evaluations are used as inputs. The complete
numerators for Birkhoff orders $10$--$12$ and World Cup orders
$9$--$12$ are listed in Appendices~\ref{app:birkhoff-series}
and~\ref{app:worldcup-series}, respectively.

\begin{table}[htbp]
\centering
\caption{Wall-clock seconds for complete Ehrhart-series reconstruction.}
\label{tab:native-times}
\begingroup
\small
\setlength{\tabcolsep}{3pt}
\begin{tabular}{@{}l*{10}{r}@{}}
\toprule
$n$ & 3 & 4 & 5 & 6 & 7 & 8 & 9 & 10 & 11 & 12\\
\midrule
Birkhoff & 0.381 & 0.419 & 0.481 & 0.561 & 0.800 & 1.665 & 7.641 & 51.538 & 517.235 & 11373.646\\
World Cup & 0.381 & 0.397 & 0.448 & 0.547 & 0.738 & 1.081 & 3.080 & 29.051 & 612.669 & 16084.670\\
\bottomrule
\end{tabular}
\endgroup
\end{table}

The elementary orders $1$ and $2$ each take less than $0.4$ seconds,
including process startup; at order $1$ the World Cup counting series
is $1$, as explained in Section~\ref{sec:worldcup}.

For the Birkhoff polytopes at
orders $10$, $11$, and $12$, the series have the form
\begin{equation*}
 \Ehr_{\B_n}(z)=
 \frac{\sum_{j=0}^{(n-1)(n-2)}a_{n,j}z^j}{(1-z)^{(n-1)^2+1}},
 \qquad n\in\{10,11,12\}.
\end{equation*}
Their numerator degrees are $72$, $90$, and $110$, and their
denominator powers are $82$, $101$, and $122$.
For example, $a_{n,1}=n!-((n-1)^2+1)$ gives the first
nonconstant coefficients
\[
 (a_{10,1},a_{11,1},a_{12,1})
 =(3\,628\,718,\ 39\,916\,699,\ 479\,001\,478).
\]

Normalized volume is taken relative to the lattice in the affine
span of $\B_n$, with a unimodular simplex having volume $1$.
Writing $d=(n-1)^2$, the Ehrhart leading-coefficient
formula~\cite{BR} gives
\begin{equation*}
 \Vol(\B_n)=d!\,[r^d]H_n(r)=h_n^*(1)
 =\sum_{j=0}^{(n-1)(n-2)}a_{n,j}\qquad(n\geq2).
\end{equation*}
The point $\B_1$ has normalized volume $1$.
Thus the reconstructed numerator also determines the normalized
volume; the order-$3$ example gives $\Vol(\B_3)=3$.
Table~\ref{tab:normalized-volumes} gives the values through
order $12$.
Throughout the tables in this paper, an integer split across
lines is read by concatenating its digit strings.

\Needspace{19\baselineskip}
\begingroup
\small
\setlength{\tabcolsep}{5pt}
\renewcommand{\arraystretch}{1.1}
\begin{longtable}{@{}>{\raggedleft\arraybackslash}p{0.035\textwidth}>{\raggedleft\arraybackslash}p{0.045\textwidth}>{\ttfamily\fontsize{9}{11}\selectfont\raggedright\arraybackslash}p{\dimexpr0.92\textwidth-4\tabcolsep\relax}@{}}
\caption{Normalized volumes of the Birkhoff polytopes.}\label{tab:normalized-volumes}\\
\toprule
$n$ & $d$ & \multicolumn{1}{l}{$\Vol(\B_n)$}\\
\midrule\endfirsthead
\multicolumn{3}{l}{\small Table \thetable\ continued.}\\
\toprule
$n$ & $d$ & \multicolumn{1}{l}{$\Vol(\B_n)$}\\
\midrule\endhead
\midrule\multicolumn{3}{r}{\small Continued on the next page.}\\\endfoot
\bottomrule\endlastfoot
1 & 0 & \seqsplit{1}\\
2 & 1 & \seqsplit{1}\\
3 & 4 & \seqsplit{3}\\
4 & 9 & \seqsplit{352}\\
5 & 16 & \seqsplit{4718075}\\
6 & 25 & \seqsplit{14666561365176}\\
7 & 36 & \seqsplit{17832560768358341943028}\\
8 & 49 & \seqsplit{12816077964079346687829905128694016}\\
9 & 64 & \seqsplit{7658969897501574748537755050756794492337074203099}\\
10 & 81 & \seqsplit{5091038988117504946842559205930853037841762820367901333706255223000}\\
11 & 100 & \seqsplit{4850614184837422809226598217097242693744218917606734710902805771561681107752418130058980}\\
12 & 121 & \seqsplit{8294118713368905151290245946382878334338366795638682996497206621798886666629848232728011120424312160760328308352}\\
\end{longtable}
\endgroup

\Needspace{8\baselineskip}
\section{Application to the World Cup problem}\label{sec:worldcup}

For the World Cup problem, the diagonal entries are $0$, so the
generating polynomial for row $i$ omits the variable $x_i$.
The root filter and the reconstruction argument still apply.
The row polynomials can be evaluated using the Todd coefficients
already computed for $h_r(\mathbf{X})$.

\subsection{The counting model and its Ehrhart structure}

In the round-robin model of Ekhad and Zeilberger~\cite{EZ}, let $a_{ij}$
be the number of goals scored by team $i$ against team $j$. The teams are
labelled, each pair plays one match, and $a_{ii}=0$. Requiring every team
to score and concede exactly $r$ goals gives
\begin{equation*}
 D_n(r)=\#\bigl\{A=(a_{ij})\in\N^{n\times n}:
 a_{ii}=0,\quad \sum_j a_{ij}=r,\quad\sum_i a_{ij}=r\bigr\}.
\end{equation*}
Thus $(a_{ij},a_{ji})$ specifies the score of the match between $i$ and $j$.
The notation $D_n$ here denotes the function called $S_n$ in~\cite{EZ}.
For $r=1$, the matrices are permutation matrices
without fixed points, so $D_n(1)$ equals the number of derangements
of $n$ elements.

Write $\mathcal W_n=\{A\in\B_n:a_{ii}=0\text{ for all }i\}$.
For $n\geq2$, this is a nonempty lattice face of $\B_n$, and $D_n$
is its Ehrhart polynomial. In the elementary case $n=2$, the unique
matrix has off-diagonal entries $r$, so $D_2(r)=1$ and
$\Ehr_{\mathcal W_2}(z)=1/(1-z)$.
For $n=1$, the matrix count instead gives $D_1(0)=1$ and
$D_1(r)=0$ for $r>0$. We record its counting series
$F_1(z)=\sum_{r\geq0}D_1(r)z^r=1$ separately:
$\mathcal W_1$ is empty, so this is not the Ehrhart series of a
nonempty lattice polytope.
For $n\geq3$, the dimension and reflection identities are
\begin{align*}
 d_0&=n^2-3n+1, \\
 D_n(-1)&=\cdots=D_n(-(n-2))=0,\qquad
 D_n(-(n-1)-r)=-D_n(r).
\end{align*}
These structural identities are given in~\cite[Comment~1]{EZ}.
At $n=2$, $\mathcal W_2$ is a point.

For $n\geq3$, the corresponding numerator parameters can be read
directly from this structure. Relative-interior lattice points have all
off-diagonal entries positive. Subtracting $J-I$, where $J$ is the
matrix with all entries $1$, decreases every row and column sum by $n-1$ and gives
an arbitrary nonnegative matrix with diagonal entries $0$. The codegree is therefore
$n-1$. These matrices are magic labelings of the bipartite graph obtained
from $K_{n,n}$ by deleting a fixed perfect matching. Thus
\cite[Proposition~4.5]{Stanley76} gives nonnegativity; equivalently, one may
apply \cite[Theorem~2.1]{Stanley80}. Together with Ehrhart reciprocity
\cite{BR}, this gives
\begin{equation*}
 \Ehr_{\mathcal W_n}(z)=\sum_{r\geq0}D_n(r)z^r
 =\frac{g_n(z)}{(1-z)^{d_0+1}},\qquad
 g_n(z)=\sum_{j=0}^{s_0}b_{n,j}z^j,
 \quad s_0=(n-1)(n-3),
\end{equation*}
where
\begin{equation}\label{eq:wc-pal}
 b_{n,j}\in\N,\qquad b_{n,0}=b_{n,s_0}=1,\qquad
 b_{n,j}=b_{n,s_0-j}.
\end{equation}
Indeed, the interior translation identifies the interior generating
function with $z^{n-1}\Ehr_{\mathcal W_n}(z)$, which yields the displayed
palindromicity by reciprocity. Hence only
$D_n(0),\ldots,D_n(K_0)$ are needed, where $K_0=\lfloor s_0/2\rfloor$.
The degree $s_0$ can be odd.

\subsection{Evaluation of the row polynomials}

For row $i$, write $h_r(\mathbf{X}\setminus\{x_i\})$ for the complete
homogeneous symmetric polynomial in the remaining $n-1$ variables.
When evaluated at a multiset, this notation removes one occurrence
of $x_i$, even if its value is repeated. Set $h_{-1}(\mathbf{X})=0$.
The generating function identity
\[
 \sum_{r\geq0}h_r(\mathbf{X}\setminus\{x_i\})u^r
 =(1-x_i u)\prod_j(1-x_j u)^{-1}
\]
gives
\begin{equation*}
 h_r(\mathbf{X}\setminus\{x_i\})=h_r(\mathbf{X})-x_i h_{r-1}(\mathbf{X}).
\end{equation*}
Thus, writing
\begin{equation*}
 F_{n,r}(\mathbf{X})=\prod_{i=1}^n\bigl(h_r(\mathbf{X})-x_i h_{r-1}(\mathbf{X})\bigr),
 \qquad
 D_n(r)=[x_1^r\cdots x_n^r]F_{n,r}(\mathbf{X}),
\end{equation*}
we again have a symmetric polynomial of total degree $nr$.
With $m=r+1$, the argument in Theorem~\ref{thm:filter} proves
\begin{equation}\label{eq:wc-filter}
 D_n(r)=m^{-n}\sum_{\mathbf{X}\in\Omega^n}
          \left(\prod_i x_i\right)F_{n,r}(\mathbf{X})
 \quad\text{in }\F_p.
\end{equation}

Thus the root sum requires the adjacent values $h_r(M)$ and
$h_{r-1}(M)$. The latter follows from the same local calculation:
for $r\geq1$, multiplying $E_M(u)$ by $u$ gives
\begin{equation}\label{eq:wc-second-ct}
 h_{r-1}(M)=\CT\limits_u
 \frac{u^{-(r-1)}-u^2}{\prod_b(1-bu)^{\mu_b}}.
\end{equation}
For $n\geq3$ this rational function is proper at infinity.
Its numerator vanishes at every $u=a^{-1}$, just as for $h_r$.
At a repeated root $a$, use the scalar $C_a$,
the normalized series $F_a(s)$, and its coefficients $f_{a,j}$
from Subsection~\ref{sec:dd}. Then
\begin{align}
 h_r(M)&=\sum_{a:\,\mu_a\geq2}C_a f_{a,\mu_a-2},
                    \label{eq:wc-todd-a}\\
 h_{r-1}(M)&=\sum_{a:\,\mu_a\geq2}\frac{C_a}{a}
                    \sum_{j=0}^{\mu_a-2}\frac{f_{a,j}}{(\mu_a-2-j)!}.
                    \label{eq:wc-todd-b}
\end{align}
\begin{proof}
Apply the same local change of variables to~\eqref{eq:wc-second-ct}.
Its numerator at $u=a^{-1}e^s$ equals
$a^{-2}e^{-(r-1)s}(1-e^{ms})$. Relative to the calculation for
$h_r(M)$, this inserts $a^{-1}e^s$. The local contribution is
therefore $(C_a/a)[s^{\mu_a-2}]e^sF_a(s)$, which is the convolution
in~\eqref{eq:wc-todd-b}. The same truncated-series reduction
is valid under $p>n$.
\end{proof}
Only a linear number of further operations per repeated root is
needed once the coefficients $f_{a,j}$ have been computed.
In particular, there is no second Todd exponentiation.
For $r=0$, use $h_0(M)=1$ and $h_{-1}(M)=0$ directly.

Define
\[
 \Theta_0(M)=\left(\prod_a a^{\mu_a}\right)
          \prod_a\bigl(h_r(M)-ah_{r-1}(M)\bigr)^{\mu_a}.
\]
\begin{proposition}\label{prop:wc-filter}
For $n\geq3$ and $r\geq1$,
\begin{equation}\label{eq:wc-anchor}
 D_n(r)=m^{1-n}\sum_{M\in\calM_{\max}}
                  \frac{\mult(M)}{\eta(M)}\Theta_0(M).
\end{equation}
The local evaluation of $\Theta_0(M)$ has the same asymptotic cost as
that of $\Theta(M)$ in Algorithm 1.
\end{proposition}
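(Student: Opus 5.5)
The proof follows the Birkhoff case, reusing the filter \eqref{eq:wc-filter}, the marking lemma, and the local formulas \eqref{eq:wc-todd-a}--\eqref{eq:wc-todd-b}.

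First, group ordered tuples by their multisets. For $\mathbf X\in\Omega^n$ with underlying multiset $M$, the product $F_{n,r}(\mathbf X)=\prod_i(h_r(\mathbf X)-x_ih_{r-1}(\mathbf X))$ depends only on $M$, because $h_r$ and $h_{r-1}$ are symmetric. Collecting equal factors gives $\prod_a(h_r(M)-ah_{r-1}(M))^{\mu_a}$. Since $\prod_i x_i=\prod_a a^{\mu_a}$, \eqref{eq:wc-filter} becomes $D_n(r)=m^{-n}\sum_{M\in\calM}\mult(M)\Theta_0(M)$.

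Second, check scaling invariance, so that Lemma~\ref{lem:eligible-marking} applies with $E(M)=\{a:\mu_a=q(M)\}$, as in Corollary~\ref{cor:max-anchor}. By homogeneity, $h_r(\lambda M)-\lambda a\,h_{r-1}(\lambda M)=\lambda^r(h_r(M)-ah_{r-1}(M))$. Hence $\Theta_0(\lambda M)=\lambda^{n+nr}\Theta_0(M)=\lambda^{nm}\Theta_0(M)=\Theta_0(M)$. Multiplicities, $\mult$, and the tie count $\eta$ are preserved, and $\eta(M)\le n<p$.

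Third, and this is the main point, show that all-distinct multisets contribute $0$. This is needed because $\calM_{\max}$ requires $q(M)\ge2$. For $M$ with all roots distinct, \eqref{eq:distinct-zero} gives $h_r(M)=0$. For $h_{r-1}(M)$, use \eqref{eq:wc-second-ct}. Since $n\ge3$, the rational function is proper, and its numerator $u^{-(r-1)}(1-u^{m})$ cancels every simple pole at $a^{-1}$. So there are no poles away from $0$, and the constant term vanishes by the same argument as \eqref{eq:local}, i.e., the sum is over repeated roots only and is therefore empty. Equivalently, $Q_M(u)$ has degree $m-n<r-1$ when $n\ge3$, so $h_{r-1}(M)=[u^{r-1}]Q_M(u)=0$. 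Thus every factor of $\Theta_0(M)$ vanishes. The hypothesis $r\ge1$ makes \eqref{eq:wc-second-ct} valid. When $r=1$, $m=2$ and all-distinct multisets do not exist for $n\ge3$. Applying \eqref{eq:eligible-family} then yields \eqref{eq:wc-anchor}.

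Finally, the cost claim. The coefficients $f_{a,j}$ are already computed for $h_r(M)$. Formula \eqref{eq:wc-todd-b} adds $O(\mu_a)$ operations per repeated root, using precomputed inverse factorials, which is $O(n)$ in total. Forming the $t(M)+(n-\sum_{\mu_a\ge2}\mu_a)$ factors $h_r-ah_{r-1}$ and their powers costs $O(n\log n)$ or $O(n)$ with repeated multiplication. All of this is dominated by the $O(n^2)$ Todd evaluation per leaf. Prefix reuse is unchanged, since the cached data are the same.
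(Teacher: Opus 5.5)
Your proof is correct and follows the same route as the paper's. You group tuples into multisets, check that the summand is invariant under scaling, show that all-distinct multisets have $h_r(M)=h_{r-1}(M)=0$, apply Lemma~\ref{lem:eligible-marking} with marks at roots of maximal multiplicity, and bound the cost by reusing the Todd arrays in \eqref{eq:wc-todd-a}--\eqref{eq:wc-todd-b} at $O(n)$ extra work per multiset. You also make explicit two steps the paper leaves implicit: the scaling factor $\lambda^{n+nr}=\lambda^{nm}=1$, and the degree bound $m-n<r-1$ for $Q_M$.
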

\begin{proof}
All-distinct multisets have $h_r(M)=h_{r-1}(M)=0$, by their local
expressions. The product $F_{n,r}$ is symmetric and homogeneous
of degree $nr$, so its weighted summand is invariant under
scaling by $\Omega$. Apply Lemma~\ref{lem:eligible-marking}
to roots of maximum multiplicity in~\eqref{eq:wc-filter}.
Equations~\eqref{eq:wc-todd-a} and~\eqref{eq:wc-todd-b} share all
Todd coefficient arrays. Their additional convolutions and the
row-product evaluation cost $O(n)$ operations per multiset,
within the stated bound.
\end{proof}

The shared Todd coefficients also allow selected World Cup classes
to be summed before the singleton roots are enumerated. The
implementation uses the repeated-multiplicity patterns
\begin{equation*}
 \mathcal G_0=\{(2),(3),(2,2),(3,2),(2,2,2),(2,2,2,2)\}.
\end{equation*}
As before, multiplicities not shown are $1$, and impossible patterns
are omitted. Patterns outside $\mathcal G_0$ are evaluated by
Proposition~\ref{prop:wc-filter}. The pattern $(2)$ contributes $0$.
For the other selected patterns, the following finite-product
construction gives the subset and moment recurrences.

Fix one of these multiplicity classes. Write a multiset in the class as
$M=R^{\boldsymbol{\mu}}\cup S$, where $R$ is the set of
repeated roots, $\boldsymbol{\mu}=(\mu_a)_{a\in R}$, and $R^{\boldsymbol{\mu}}$ contains
$\mu_a$ copies of each $a\in R$. The singleton set $S$ has size
$k=n-\sum_{a\in R}\mu_a$; put $t=|R|$. Set
\[
 V_a(S)=\prod_{s\in S}(a-s)^{-1}.
\]
For an indexed occurrence $x$ of $M$, the notation
$M\setminus\{x\}$ deletes that one occurrence. Suppose first that
all repeated roots are double, and put
\begin{equation}\label{eq:wc-agg-double-row}
 \widetilde q_a=a^{n-4}\prod_{b\in R\setminus\{a\}}(a-b)^{-2},
 \qquad P_{a,x}=a-x.
\end{equation}
The coefficient $f_{a,0}=1$ in the two adjacent Todd evaluations
gives
\begin{equation}\label{eq:wc-agg-row-linear}
 m^{-1}h_r(M\setminus\{x\})=\sum_{a\in R}\widetilde q_aV_a(S)P_{a,x}.
\end{equation}
For $|R|=1$, this vanishes when $x$ is the double root, proving
the assertion about the pattern $(2)$.

For the patterns $(3)$ and $(3,2)$, scale the unique triple root
to $1$ and let $\calD=R\setminus\{1\}$ be the set of double roots.
Thus $|\calD|$ is $0$ or $1$.  Define
\begin{align*}
 \widetilde q_1&=\prod_{b\in \calD}(1-b)^{-2},
 &\widetilde q_b&=b^{n-4}(b-1)^{-3}
             \prod_{a\in \calD\setminus\{b\}}(b-a)^{-2},\\
 c_0&=\frac{m+2n-7}{2}-2\sum_{b\in \calD}(1-b)^{-1},
 &\Lambda(S)&=\sum_{s\in S}(1-s)^{-1}.
\end{align*}
Thus $c_0=c-1$, where $c$ is the corresponding Birkhoff
triple-root parameter in Subsection~\ref{sec:moments}.
With an auxiliary variable $Z$, put
\begin{equation*}
 P_{1,x}(Z)=1+(1-x)Z,\qquad
 P_{b,x}(Z)=b-x\quad(b\in \calD).
\end{equation*}
Then the row formula is
\begin{equation}\label{eq:wc-agg-row-triple}
 m^{-1}h_r(M\setminus\{x\})
 =\left.\sum_{a\in R}\widetilde q_aV_a(S)P_{a,x}(Z)
                                      \right|_{Z=c_0-\Lambda(S)}.
\end{equation}
To derive it directly from the Todd coefficients, write
$\alpha=(m+2n-5)/2$.  The triple-root contributions to
$h_r(M)$ and $h_{r-1}(M)$ are respectively
\[
 m \widetilde q_1 V_1(S)(\alpha-2\sum_{b\in \calD}(1-b)^{-1}-\Lambda(S)),
 \qquad
 m \widetilde q_1 V_1(S)(\alpha-1-2\sum_{b\in \calD}(1-b)^{-1}-\Lambda(S)).
\]
Subtracting $x$ times the second contribution from the first gives
the first summand in
\eqref{eq:wc-agg-row-triple}.  The double-root contributions give
the remaining summands by~\eqref{eq:wc-agg-double-row}.

Here is the common subset construction.  Let $\mathbf{Y}=(Y_a)_{a\in R}$,
and define the polynomial for the rows corresponding to repeated roots
\begin{equation*}
 \mathcal P_R(\mathbf{Y},Z)=
     \prod_{b\in R}\left(\sum_{a\in R}P_{a,b}(Z)Y_a\right)^{\mu_b}.
\end{equation*}
When all roots are double, omit $Z$ throughout.  For an exponent
assignment $\mathbf{e}\in\N^R$ with $|\mathbf{e}|=n$, put
\[
 w_s(\mathbf{e})=s\prod_{a\in R}(a-s)^{-e_a}.
\]
In the all-double case define
\begin{equation}\label{eq:wc-agg-double-contraction}
 \Psi_R(\mathbf{e})=[v^k\mathbf{Y}^{\mathbf{e}}]\,
 \mathcal P_R(\mathbf{Y})
 \prod_{s\in\Omega\setminus R}
       \left(1+v w_s(\mathbf{e})\sum_{a\in R}(a-s)Y_a\right).
\end{equation}
For a class with a triple root, a moment variable records $\Lambda(S)$.
For polynomials in $Z$ and a truncated variable $z$, define the
linear functional
\begin{equation*}
 \mathcal L_{c_0} F(Z,z)=
   \sum_{a\geq0}\sum_{j=0}^{a}
     (-1)^j\frac{a!}{(a-j)!}c_0^{a-j}[Z^a z^j]F(Z,z).
\end{equation*}
It satisfies
$\mathcal L_{c_0}(P(Z)e^{z\lambda})=P(c_0-\lambda)$ by the binomial theorem.
With $\xi_s=(1-s)^{-1}$, put
\begin{equation}\label{eq:wc-agg-triple-contraction}
 \Psi_R(\mathbf{e})=\mathcal L_{c_0}\,[v^k\mathbf{Y}^{\mathbf{e}}]\,
 \mathcal P_R(\mathbf{Y},Z)
 \prod_{s\in\Omega\setminus R}
 \left(1+v w_s(\mathbf{e})e^{z \xi_s}
                    \sum_{a\in R}P_{a,s}(Z)Y_a\right).
\end{equation}
All exponentials here are finite truncations.  The largest required
$Z$ and $z$ degrees are $n-3$: for $(3)$ the degree is at most
$k=n-3$, and for $(3,2)$ it is at most $k+2=n-3$.
Consequently $p>n$ makes every required factorial invertible.
These products are instances of
Lemma~\ref{lem:exponential-moments}; the choice $p>2n$ in
Algorithm 1 also satisfies its factorial conditions here.

For a fixed set of repeated roots, expand the product of the row
polynomials using~\eqref{eq:wc-agg-row-linear} or
\eqref{eq:wc-agg-row-triple}.  The exponent $e_a$ counts how often
the summand at the root $a$ is chosen.  Pulling out
$\prod_a \widetilde q_a^{e_a}V_a(S)^{e_a}$ places the singleton denominator
factors, together with their root-product factor $s$, in $w_s(\mathbf{e})$.
Selecting the $v$-term exactly $k$ times chooses the singleton set;
the coefficient $\mathbf{Y}^{\mathbf{e}}$ counts the choices of summands in all rows.
For a triple root, the functional substitutes $c_0-\Lambda(S)$.
Thus $\Psi_R(\mathbf{e})$ is exactly the required sum over singleton sets.
There is no additional multinomial coefficient: all choices have
already been counted by the polynomial products.

In the following class sum, all $\mu_a$ equal $2$ for an all-double
class; for a triple-root class, $\mu_1=3$ and $\mu_a=2$ for
$a\in R\setminus\{1\}$. The value $\eta(M)$ is constant within the
fixed class; here $M$ can be any representative of that class.
The complete normalized class contribution is therefore
\begin{equation}\label{eq:wc-agg-class-total}
 \frac{n!m}{\eta(M)}
 \sum_{\substack{R\subseteq\Omega\\ \lvert R\rvert=t,\;1\in R}}
    \frac{\prod_{a\in R}a^{\mu_a}}{\prod_{a\in R}\mu_a!}
    \sum_{|\mathbf{e}|=n}\left(\prod_{a\in R}\widetilde q_a^{e_a}\right)\Psi_R(\mathbf{e}).
\end{equation}
The product of multiplicity factorials is constant within a class.
For the two triple-root classes, $1$ is the unique triple root and
$\eta(M)=1$.  For a class of $t$ double roots, require $1\in R$
and take $\eta(M)=t$.  These factors follow from the eligible-root
marking argument.  The factor $m$ combines $m^n$ from the row factors
with the marked root-average factor $m^{1-n}$.

For three or four double roots the expansion can instead mark a
root where $e_a$ is maximal.  Replace the outer factor $1/t$ by
restricting the inner sum to $e_1\geq e_a$ for all $a\in R$ and
dividing each term by
$\tau(\mathbf{e})=\#\{a:e_a=\max_b e_b\}$.  Each expanded term is invariant
under root scaling.  Indeed, if $k=n-2t$, the degrees contributed
by the repeated-root product, $\mathcal P_R$, the factors $\widetilde q_a^{e_a}$,
and the selected singleton factors are respectively
$2t$, $2t$, $n(k-2)$, and $k(2-n)$, whose sum is $0$.
This proves the alternative marking formula even when the scaling
action has stabilizers.

The finite products are evaluated each singleton candidate in turn,
retaining only the required coefficient indices.  In the all-double
case each factor has the form $1+$ a linear polynomial in $\mathbf{Y}$.
For the triple-root classes the same update is combined with the
univariate exponential-moment convolution.  This uses precisely
the finite subset and moment operations from the ordinary Birkhoff
calculation; the row polynomials differ because a variable is deleted.

\subsection{Reconstruction of the World Cup series}

We complete the calculation by recovering the integer counts
from~\eqref{eq:wc-anchor}. The first $n-1$ rows each have $\binom{r+n-2}{n-2}$
nonnegative assignments to their allowed entries. The column
sums determine the last row, giving
\begin{equation*}
 0\leq D_n(r)\leq U_n^0(r):=\binom{r+n-2}{n-2}^{n-1}.
\end{equation*}
For $n\geq3$, Algorithm 1 uses this bound, the class set
$\mathcal G_0$ with the contributions in~\eqref{eq:wc-agg-class-total},
and $\Theta_0(M)$ in place of $\Theta(M)$ on the residual multisets.
For $n\leq2$, use the elementary formulas above. The shared
Todd evaluation and the class-summation proof give the exact
modular count; the identical CRT argument proves integer recovery. The numerator is reconstructed from
$0\leq r\leq K_0=\lfloor s_0/2\rfloor$ by
\begin{equation*}
 b_{n,k}=\sum_{i=0}^k(-1)^{k-i}\binom{d_0+1}{k-i}D_n(i),
 \qquad0\leq k\leq K_0,
\end{equation*}
followed by the reflection~\eqref{eq:wc-pal}.
Consequently,
\begin{equation*}
 D_n(r)=\sum_{j=0}^{s_0}b_{n,j}\binom{r+d_0-j}{d_0}.
\end{equation*}
This proves correctness of the World Cup version of both
algorithms. Its reconstruction range is shorter, while its outer
sum has the separate complexity described below.

For instance, at order $4$, $d_0=5$ and $s_0=3$.
The values $D_4(0)=1$ and $D_4(1)=9$ give
\[
 \Ehr_{\mathcal W_4}(z)=\frac{1+3z+3z^2+z^3}{(1-z)^6},\qquad
 D_4(r)=\frac{(r+1)(r+2)(2r+3)(r^2+3r+5)}{30},
\]
in agreement with~\cite{EZ}.
Appendix~\ref{app:worldcup-series} gives the complete numerator
coefficients for orders $9$--$12$.

We now estimate the computational cost. Define $N_{\boldsymbol{\lambda}}(n,m)$
by~\eqref{eq:agg-pattern-count}, and let
\begin{equation*}
 R_0(n,m)=\sum_{\substack{\boldsymbol{\lambda}\notin\mathcal G_0\\
                            \lambda_i\geq2,\ \sum_i\lambda_i\leq n}}
                               N_{\boldsymbol{\lambda}}(n,m).
\end{equation*}
The residual traversal uses the same cached Todd logarithms and
prefactors as the Birkhoff calculation.  Once the local coefficients
have been computed, the convolution giving $h_{r-1}$ and the product of the row
polynomials require $O(n)$ additional operations.  The residual total
therefore costs $O(n^2R_0(n,m))$ field operations.

Let $G_0(n,m)$ denote the cost of the five nonzero class sums.
For fixed $n$, a class with $t$ repeated roots has $O_n(m^{t-1})$
repeated-root sets in which $1$ has maximal multiplicity.  Its coefficient arrays and the number of
exponent assignments depend only on $n$, while each product in
\eqref{eq:wc-agg-double-contraction} or
\eqref{eq:wc-agg-triple-contraction} is evaluated in a single pass over
at most $m$ singleton candidates.  Hence that class costs $O_n(m^t)$,
and $G_0(n,m)=O_n(m^4)$.  The complete modular cost, including the
precomputed tables, is
\begin{equation}\label{eq:wc-agg-total-cost}
 O\bigl(m^2+mn^2+n^2R_0(n,m)+G_0(n,m)\bigr).
\end{equation}
All patterns of defect $\sum_i(\lambda_i-1)$ at most $2$ are removed.
The only remaining defect-$3$ pattern is $(4)$, and therefore,
for fixed $n\geq5$,
\begin{equation*}
 R_0(n,m)=\frac{m^{n-4}}{(n-4)!}+O_n(m^{n-5}).
\end{equation*}
Also $R_0(3,m)=0$ and $R_0(4,m)=1$.
It follows that the optimized World Cup calculation has the
fixed-order bound
\begin{equation*}
 O_n\bigl(m^{n-4}+m^4\bigr)\qquad(n\geq4).
\end{equation*}
This bound includes both the residual evaluations and class sums;
it is not a polynomial bound in the variable $n$.

The coefficient arrays for the mixed pattern $(3,2)$ have four
indices (subset cardinality, summand choice, polynomial degree,
and moment degree), giving the conservative bound $O(n^4)$ per
worker. If $J_0$ is the residual work-item count, then
$J_0\leq R_0(n,m)$. The class with four double roots also stores
$O(m^3)$ scheduling triples. For $t=3,4$, the program caches the
transition and final-state indices of each exponent assignment in
the $t$-double-root class. Their total number $E_0$ satisfies
\[
 E_0=O\left(\sum_{t=3}^{4}
       \binom{n+t-1}{t-1}\binom{n-t}{t}\right)=O(n^7):
\]
there are $O\!\left(\binom{n+t-1}{t-1}\right)$ exponent assignments,
each with at most $\binom{k+t}{t}=\binom{n-t}{t}$ states, where
$k=n-2t$. Infeasible classes are skipped before constructing these
caches. The World Cup implementation therefore uses
$O(m^3+mn^2+n^4+nJ_0+E_0)$ field elements and machine words
for the shared data and one worker. Constructing the transition indices takes
$O(E_0)$ integer operations. The same partition-generation
bookkeeping described in Subsection~\ref{sec:computation} applies.
For fixed $n$, these caches have constant size and do not alter
the stated dependence on $m$.
For complete-series complexity, sum~\eqref{eq:wc-agg-total-cost}
over the reconstruction primes and $1\leq r\leq K_0$, then add
the CRT and numerator-transform costs as in
Subsection~\ref{sec:computation}.

\Needspace{8\baselineskip}

\section{Concluding remarks}\label{sec:conclusion}
We have developed an exact method for computing the Ehrhart series
of Birkhoff polytopes, proved its correctness, and analyzed its
arithmetic complexity. The method also applies to the World Cup
problem. Our computations determine the complete series for both
families through order $12$, extending the previously known results
to orders $10$--$12$ for Birkhoff polytopes and orders $9$--$12$
for the World Cup problem. The same series also yields the normalized
volumes of the Birkhoff polytopes.

The main counting formula expresses $H_n(r)$ as a weighted sum
over root multisets. It combines a finite-field root-of-unity filter
with permutation and scaling symmetries. The key local reduction is
a constant-term cancellation: when the roots have order $m=r+1$,
only repeated roots contribute, and a root of multiplicity
$\mu_a\geq2$ requires a generalized Todd coefficient of degree
$\mu_a-2$. Thus the degree needed for each local calculation is
controlled by root multiplicity rather than by $r$. Shared
logarithmic coefficients reduce repeated work, while elementary
symmetric functions and finite-product identities sum selected
multiplicity classes. The remaining multisets are evaluated
individually. The Chinese remainder theorem recovers the integer
counts, and the symmetry of the Ehrhart numerators determines the
complete series from finitely many such counts.

The multiplicity classes selected in the implementation are
computational choices within a general summation procedure. Further
class sums can be derived from the same Todd formulas and
finite-product identities. Their practical value depends on the cost
and storage of the required coefficients compared with the individual
evaluations they replace. Selecting additional classes on this basis
may make computations at larger orders more efficient. It would also
be interesting to extend the method to other families of polytopes
and to study the optimal selection of multiplicity classes.

\subsection*{Acknowledgements}
Guoce Xin was partially supported by the National Natural Science Foundation of China (No. 12571355). Chen Zhang was partially supported by the National Natural Science Foundation of China (No. 12601634), and the Postdoctoral Fellowship Program and China Postdoctoral Science Foundation (No. BX20250066). Yueming Zhong was partially supported by the National Natural Science Foundation of China (No. 12401441), and the Natural Science Foundation of Hunan Province (No. 2025JJ60010).

\clearpage
\appendix
\setlength{\LTcapwidth}{\textwidth}
\section{Ehrhart series of the Birkhoff polytopes}\label{app:birkhoff-series}

Tables~\ref{tab:series10}--\ref{tab:series12} specify the complete
Ehrhart series for $10\leq n\leq12$. Write
\[
 \Ehr_{\B_n}(z)=\frac{h_n^*(z)}{(1-z)^{(n-1)^2+1}},
 \qquad h_n^*(z)=\sum_{j=0}^{(n-1)(n-2)}a_{n,j}z^j.
\]
The caption of each table specifies its order, denominator, and numerator
degree. The coefficients satisfy $a_{n,j}=a_{n,(n-1)(n-2)-j}$. A row containing
two indices gives the common coefficient at both indices; the middle
coefficient has a single index.
All entries are exact decimal integers; the convention for integers
split across lines is the same as in Table~\ref{tab:normalized-volumes}.

\begingroup
\small
\setlength{\tabcolsep}{5pt}
\renewcommand{\arraystretch}{1.05}
\begin{longtable}{@{}>{\raggedleft\arraybackslash}p{0.075\textwidth}>{\ttfamily\fontsize{8}{10}\selectfont\raggedright\arraybackslash}p{\dimexpr0.925\textwidth-2\tabcolsep\relax}@{}}
\caption{Ehrhart series of $\B_{10}$: $\Ehr_{\B_{10}}(z)=h_{10}^*(z)/(1-z)^{82}$ with $\deg_z h_{10}^*(z)=72$.}\label{tab:series10}\\
\toprule $j$ & \multicolumn{1}{l}{$a_{10,j}$}\\\midrule\endfirsthead
\multicolumn{2}{l}{\small Table \thetable\ continued ($n=10$).}\\
\toprule $j$ & \multicolumn{1}{l}{$a_{10,j}$}\\\midrule\endhead
\midrule\multicolumn{2}{r}{\small Continued on the next page.}\\\endfoot
\bottomrule\endlastfoot
0, 72 & \seqsplit{1}\\
1, 71 & \seqsplit{3628718}\\
2, 70 & \seqsplit{3930432549921}\\
3, 69 & \seqsplit{523531572961727040}\\
4, 68 & \seqsplit{14313685887007110389910}\\
5, 67 & \seqsplit{117110325931468069170480516}\\
6, 66 & \seqsplit{370305244152143622256665797598}\\
7, 65 & \seqsplit{540741822697742746416903002112576}\\
8, 64 & \seqsplit{414516895665374074638648683187039105}\\
9, 63 & \seqsplit{183436191001675125693891761801489869870}\\
10, 62 & \seqsplit{50376067327592353102629199786538999629626}\\
11, 61 & \seqsplit{9082439495909559883277485441555522339980638}\\
12, 60 & \seqsplit{1124124685278142653960391967009294709584743701}\\
13, 59 & \seqsplit{99016782897821728221462546352511822310001317720}\\
14, 58 & \seqsplit{6392862947856073623390254478279202126825536168180}\\
15, 57 & \seqsplit{310021821817414906733897940054396588635948823346176}\\
16, 56 & \seqsplit{11526467679848805121871838320696660355346099549753848}\\
17, 55 & \seqsplit{334296371158435459524826922243499569956294852228972416}\\
18, 54 & \seqsplit{7675614222160796755003973368449618459074323445142490285}\\
19, 53 & \seqsplit{141301368687456132994439669280227936421552014583034368390}\\
20, 52 & \seqsplit{2108549383764593830627825067587000240244001328139457833226}\\
21, 51 & \seqsplit{25748180019959407071117875946432501465130727937488807329358}\\
22, 50 & \seqsplit{259430644949146564878308595241968982518634659112149874067931}\\
23, 49 & \seqsplit{2172388251743737662976967496690261563624933447979318986045100}\\
24, 48 & \seqsplit{15213521184235022347184247187353961179574591356974405846418000}\\
25, 47 & \seqsplit{89596040968395980892894519522306777205649880259593298571821068}\\
26, 46 & \seqsplit{445864318945235784070572698794699448592379804155163391306946624}\\
27, 45 & \seqsplit{1882742376112666314159647820535412450893557211822803699825096628}\\
28, 44 & \seqsplit{6770679139870199435001217157342183415397556288627522120943984795}\\
29, 43 & \seqsplit{20801186277673884492693106818083073538553183319976981197958357830}\\
30, 42 & \seqsplit{54742091855624509891127343422350118666254248373865808932228737218}\\
31, 41 & \seqsplit{123683581502889715478428796195535625877431628348165441943384644054}\\
32, 40 & \seqsplit{240364207903588546157150876458942846384432519001416173834835524198}\\
33, 39 & \seqsplit{402387255997473740358035188547450828384139006678958357432144712190}\\
34, 38 & \seqsplit{580941927300599047625066955956145113149078451784348342226312738185}\\
35, 37 & \seqsplit{723913186556189811061909774320178360392903263805470299534099720848}\\
36 & \seqsplit{778959002885366533897754535827961188980182747256476613358558536024}\\
\end{longtable}
\endgroup

\clearpage
\begingroup
\small
\setlength{\tabcolsep}{5pt}
\renewcommand{\arraystretch}{1.05}
\begin{longtable}{@{}>{\raggedleft\arraybackslash}p{0.075\textwidth}>{\ttfamily\fontsize{7.6}{10}\selectfont\raggedright\arraybackslash}p{\dimexpr0.925\textwidth-2\tabcolsep\relax}@{}}
\caption{Ehrhart series of $\B_{11}$: $\Ehr_{\B_{11}}(z)=h_{11}^*(z)/(1-z)^{101}$ with $\deg_z h_{11}^*(z)=90$.}\label{tab:series11}\\
\toprule $j$ & \multicolumn{1}{l}{$a_{11,j}$}\\\midrule\endfirsthead
\multicolumn{2}{l}{\small Table \thetable\ continued ($n=11$).}\\
\toprule $j$ & \multicolumn{1}{l}{$a_{11,j}$}\\\midrule\endhead
\midrule\multicolumn{2}{r}{\small Continued on the next page.}\\\endfoot
\bottomrule\endlastfoot
0, 90 & \seqsplit{1}\\
1, 89 & \seqsplit{39916699}\\
2, 88 & \seqsplit{452781290674450}\\
3, 87 & \seqsplit{477314825689047718350}\\
4, 86 & \seqsplit{82809782801129946796143525}\\
5, 85 & \seqsplit{3615073965406805154890689634967}\\
6, 84 & \seqsplit{53133941556770686186206623242160448}\\
7, 83 & \seqsplit{322765899360273298659780109732006148760}\\
8, 82 & \seqsplit{940616252129049970157847218118695654897325}\\
9, 81 & \seqsplit{1470238409342476768006056617061512716051912025}\\
10, 80 & \seqsplit{1342589058994902212160998774084062339003485598198}\\
11, 79 & \seqsplit{765834182573489012126835860555662733351102630081785}\\
12, 78 & \seqsplit{287815191933010511358599421878416033591125901022460032}\\
13, 77 & \seqsplit{74407790512327740725062855401107779490260018421992616835}\\
14, 76 & \seqsplit{13709675629853852932473692551293899369698317944201973884250}\\
15, 75 & \seqsplit{1854028187754163794323929868692042083064388543087705907592899}\\
16, 74 & \seqsplit{188632409841035172379585622038900969080138784675693186012375410}\\
17, 73 & \seqsplit{14744386049585446730753287712110482685674612390506448037752006741}\\
18, 72 & \seqsplit{901426976909789176793811134260465190936126686733668551798031796550}\\
19, 71 & \seqsplit{43775986242066536452635125838818737704449431759034695657811679096775}\\
20, 70 & \seqsplit{1711448040433702770920608560344943552651403117609461035389084533362965}\\
21, 69 & \seqsplit{54499856970495985905642547037743573717677667437781962028854826290464135}\\
22, 68 & \seqsplit{1428204868712987759120339972294323776471884539993229753266854055160804835}\\
23, 67 & \seqsplit{31080025427426744413251743594100648816855537786999642781828643913321034515}\\
24, 66 & \seqsplit{566173063737500192621065134237927047053947432494028567693389635355645489600}\\
25, 65 & \seqsplit{8695345806115821490029509591382550275764459941144423800023560761199008535092}\\
26, 64 & \seqsplit{113305285654904964932147285250132430252821437040823287609192118527531583396749}\\
27, 63 & \seqsplit{1259797151109479867098510975938134690737958990497517040617077700510099987516160}\\
28, 62 & \seqsplit{12012707324997888260860623532190304778818786069924094595994010622386759867241229}\\
29, 61 & \seqsplit{98683534126789874571172345279300121908627613585575305532191285476151859650849225}\\
30, 60 & \seqsplit{701264525743973464419571813799474421854490638323460752838501689411730995586717258}\\
31, 59 & \seqsplit{4326529841872890917122409266350815386088271117819338335846398259248633456581764174}\\
32, 58 & \seqsplit{23250964630293294799632948567746761087600315419533549529744067637604639901058577430}\\
33, 57 & \seqsplit{109158954981761733902611836574450456428990580971275045170837816274734812003534714525}\\
34, 56 & \seqsplit{448882446790010834786757429962709617063839598673957729896362240820267957485025622868}\\
35, 55 & \seqsplit{1620592310112405171957487595236473857264868643198282109588715489211303406889964855579}\\
36, 54 & \seqsplit{5147315342212866627017160926156555122732057869537884877103875740958970120407365354535}\\
37, 53 & \seqsplit{14409360676462526169460275751372545013613431985219209755871179086505525643222559186964}\\
38, 52 & \seqsplit{35608755592110430446729481821941989073626495014691830681322873962264428268042303559920}\\
39, 51 & \seqsplit{77788303937603800024393018700115795383182690042352024704097745705097268010612414468417}\\
40, 50 & \seqsplit{150392283185745672239827941221876277052458954569160210873241409905740212957164128935785}\\
41, 49 & \seqsplit{257581144928353852713586500950687585156524349521553276899281195468028469601078659944090}\\
42, 48 & \seqsplit{391130748777193293204548248616898516104384453242214881989099810196146488368944292249514}\\
43, 47 & \seqsplit{526881082344555432681735679683330830580530067893167647631157346604454090967179528021070}\\
44, 46 & \seqsplit{629901596509567178000198917231743704503446344288732507018644083610752603338795547000795}\\
45 & \seqsplit{668518953150772585395358734679378019961377745979447877333202919455961351954200317292072}\\
\end{longtable}
\endgroup

\clearpage
\begingroup
\small
\setlength{\tabcolsep}{5pt}
\renewcommand{\arraystretch}{1.05}
\begin{longtable}{@{}>{\raggedleft\arraybackslash}p{0.075\textwidth}>{\ttfamily\fontsize{7.6}{10}\selectfont\raggedright\arraybackslash}p{\dimexpr0.925\textwidth-2\tabcolsep\relax}@{}}
\caption{Ehrhart series of $\B_{12}$: $\Ehr_{\B_{12}}(z)=h_{12}^*(z)/(1-z)^{122}$ with $\deg_z h_{12}^*(z)=110$.}\label{tab:series12}\\
\toprule $j$ & \multicolumn{1}{l}{$a_{12,j}$}\\\midrule\endfirsthead
\multicolumn{2}{l}{\small Table \thetable\ continued ($n=12$).}\\
\toprule $j$ & \multicolumn{1}{l}{$a_{12,j}$}\\\midrule\endhead
\midrule\multicolumn{2}{r}{\small Continued on the next page.}\\\endfoot
\bottomrule\endlastfoot
0, 110 & \seqsplit{1}\\
1, 109 & \seqsplit{479001478}\\
2, 108 & \seqsplit{62347317909591781}\\
3, 107 & \seqsplit{569053303915793430407960}\\
4, 106 & \seqsplit{683258212723871263939221247640}\\
5, 105 & \seqsplit{172767090726819833331709826567540408}\\
6, 104 & \seqsplit{12731958819430221293010423384404665815120}\\
7, 103 & \seqsplit{344367811477601237943490283674673169826229736}\\
8, 102 & \seqsplit{4048231819081906569596317151797519834709198068356}\\
9, 101 & \seqsplit{23496770241107320996113439775124088789431980394210480}\\
10, 100 & \seqsplit{74306538292562215274099629598392596472035911668548117396}\\
11, 99 & \seqsplit{138363221143506409308468497590757216504477021121620329364040}\\
12, 98 & \seqsplit{161431808000873242335976749449177536454387583370200703565860093}\\
13, 97 & \seqsplit{124132365396179335775957990714154666475155681513401559381393099430}\\
14, 96 & \seqsplit{65582701492568392730203748623517276112389128316487451240579252408801}\\
15, 95 & \seqsplit{24647107144948393234573514831002084665403848680592909985657841744437088}\\
16, 94 & \seqsplit{6784149022466451752722943259400601414995292722071426962310244328651962154}\\
17, 93 & \seqsplit{1402009308704257034170220742955625283474617771724311282600482862687827802012}\\
18, 92 & \seqsplit{222209453547718201931834416512189710165365234548824227919719505467676013707298}\\
19, 91 & \seqsplit{27510488274362394043541723553983809941133740700831166468933481611350870057270512}\\
20, 90 & \seqsplit{2703237804409676562217433591080413083954434952555289578120141112710043161665311060}\\
21, 89 & \seqsplit{213785611283196211189318352509648921787559503728154245714611976407386330598233600424}\\
22, 88 & \seqsplit{13775583400282562789051106600572908164985303340068251054884894814214611985290664850244}\\
23, 87 & \seqsplit{731123615385071167301997139733655460687554094643384026189009673626477566952143131216880}\\
24, 86 & \seqsplit{32270583052379290747650605956767935134656762383008641456626096583651069911512972289174332}\\
25, 85 & \seqsplit{1194792219258657650086905299294525515244158787024574804299175074617188522502952554871987608}\\
26, 84 & \seqsplit{37393138237235299006837502299813367169689832392846832876997810524557638744900355681578115020}\\
27, 83 & \seqsplit{996118251422698202778332286527993210299998487228482980541709398253995284559396899695628419376}\\
28, 82 & \seqsplit{22727867671408545930367241313475027629464589413911139775657781295254513373142717714823064238650}\\
29, 81 & \seqsplit{446667629211193033697395974353684128278653768582438439833379001762454889228495570664074783918092}\\
30, 80 & \seqsplit{7599896576423069773276825340742214761498072808057536257530032874431841969351855627942789012323714}\\
31, 79 & \seqsplit{112471847914390014991390044144872855555471679964955253686992432422304413711309805746791488635524640}\\
32, 78 & \seqsplit{1453873643958820096553288536016994606092926610390209433273885398539284893539960191206027156327713879}\\
33, 77 & \seqsplit{16478857359693429382861738982556713317301969767396409326140565143063816937605213394810139216603312778}\\
34, 76 & \seqsplit{164349941831706765817325685888494001077050388058181907165678399860789336884901372387139500330040668587}\\
35, 75 & \seqsplit{1446925365491579480158006826024582981662467796587796225294973924826665036673464177005076079428987699544}\\
36, 74 & \seqsplit{11277915337439814696790046096735596901096954197905041496230614342600285163814252992078512586133363229260}\\
37, 73 & \seqsplit{78033352779241712599644482146663557735892553947711272547553095934297771250269857221237068186956751839440}\\
38, 72 & \seqsplit{480466354446994257384798802321501978918104498716625576453208319715765362510407979964556601695801833835308}\\
39, 71 & \seqsplit{2638442248033301445514495682222464958184649897032146611219801581043473178388360605003689977281100532375864}\\
40, 70 & \seqsplit{12948514746510776020799030992771776377285218403180525119064326349772255777652425141509174515077154146634416}\\
41, 69 & \seqsplit{56896746237789677361944350302052177907632387997694106431842291526093818298955939135621251576589061068222152}\\
42, 68 & \seqsplit{224225189538399668635125817256693419441856516137626074020691485425441602477492411594226788580491249235310200}\\
43, 67 & \seqsplit{793737885780515555291238311462141124693048831196960379126742050523668298748097141704061163291733406510033160}\\
44, 66 & \seqsplit{2527364810486370502992502754496944607161143863751953091157427451951520372339875532251106513445143265933335907}\\
45, 65 & \seqsplit{7247681608975837771830869299080344565293237305798769052514975049236334263013041136549862357607746096992265546}\\
46, 64 & \seqsplit{18739413495661474678774060305712181476677476460141767259866462999081174678800843862113396721565118055021622519}\\
47, 63 & \seqsplit{43729041266969727420763515291244580391942947490374861836238562657851596757734322755256743877873353173271288896}\\
48, 62 & \seqsplit{92176628943222106431074506306731576003789013305435857064577632731664920416734059333093873664087634939548164004}\\
49, 61 & \seqsplit{175646310869267756178471264369856414930038034443027462917878764993477682742005858271438483528742919600608754968}\\
50, 60 & \seqsplit{302766257775845397168777121266980968428812574758637343562884925342491492917166204378273704902468376216198014388}\\
51, 59 & \seqsplit{472349005721892088322517419022112735351258736223094254779262551016161135530347147250271034163110635934659679648}\\
52, 58 & \seqsplit{667264080031427793513531200129421874317953624655732527609008346657892912466762408758501698641736433816972706032}\\
53, 57 & \seqsplit{853807736129043857131860650568726406411424655745357949086086526444538467598736475576247856495164964998741508864}\\
54, 56 & \seqsplit{989818267671217012511142208727752639825515043414995237987645318939624490308721282987386447691024940783690592144}\\
55 & \seqsplit{1039793100349881521714365926382488627313133940754037744599895598314911519343746261480704282660342205460981049696}\\
\end{longtable}
\endgroup

\clearpage
\section{Ehrhart series for the World Cup problem}\label{app:worldcup-series}

Tables~\ref{tab:wc9}--\ref{tab:wc12} give the complete numerators
for $9\leq n\leq12$ in
\[
 \Ehr_{\mathcal W_n}(z)=\frac{g_n(z)}{(1-z)^{n^2-3n+2}},\qquad
 g_n(z)=\sum_{j=0}^{(n-1)(n-3)}b_{n,j}z^j.
\]
As in Appendix~\ref{app:birkhoff-series}, two indices in one row specify
the common coefficient at both positions. Every numerator coefficient
is included, using $b_{n,j}=b_{n,(n-1)(n-3)-j}$.


\begingroup
\small
\setlength{\tabcolsep}{5pt}
\renewcommand{\arraystretch}{1.05}
\begin{longtable}{@{}>{\raggedleft\arraybackslash}p{0.075\textwidth}>{\ttfamily\fontsize{9}{10}\selectfont\raggedright\arraybackslash}p{\dimexpr0.925\textwidth-2\tabcolsep\relax}@{}}
\caption{Ehrhart series of $\mathcal W_{9}$: $\Ehr_{\mathcal W_{9}}(z)=g_{9}(z)/(1-z)^{56}$ with $\deg_z g_{9}(z)=48$.}\label{tab:wc9}\\
\toprule $j$ & \multicolumn{1}{l}{$b_{9,j}$}\\\midrule\endfirsthead
\multicolumn{2}{l}{\small Table \thetable\ continued ($n=9$).}\\
\toprule $j$ & \multicolumn{1}{l}{$b_{9,j}$}\\\midrule\endhead
\midrule\multicolumn{2}{r}{\small Continued on the next page.}\\\endfoot
\bottomrule\endlastfoot
0, 48 & \seqsplit{1}\\
1, 47 & \seqsplit{133440}\\
2, 46 & \seqsplit{5655744444}\\
3, 45 & \seqsplit{42816738899480}\\
4, 44 & \seqsplit{88847153335965150}\\
5, 43 & \seqsplit{68816208438242976312}\\
6, 42 & \seqsplit{24406702066529359243044}\\
7, 41 & \seqsplit{4555822450138168507665192}\\
8, 40 & \seqsplit{493914660218684937648064035}\\
9, 39 & \seqsplit{33426827860740619521317973976}\\
10, 38 & \seqsplit{1491104627315096991852041726856}\\
11, 37 & \seqsplit{45720578475824106766297718520048}\\
12, 36 & \seqsplit{995954935039493043448106089815446}\\
13, 35 & \seqsplit{15824922452861009878253107825728336}\\
14, 34 & \seqsplit{187357729913568393455469773498820552}\\
15, 33 & \seqsplit{1681796954331440355889353200942754048}\\
16, 32 & \seqsplit{11609930770614339123041749640294775189}\\
17, 31 & \seqsplit{62361382098118337873221967549454367008}\\
18, 30 & \seqsplit{263142474644251013358470428347461090464}\\
19, 29 & \seqsplit{879102517773995687301147713692570144704}\\
20, 28 & \seqsplit{2339794100701495050904979647518857037348}\\
21, 27 & \seqsplit{4985786705356378412445782387668813140680}\\
22, 26 & \seqsplit{8537047697622791855580797428396999207020}\\
23, 25 & \seqsplit{11776441703713092363361692904046401608792}\\
24 & \seqsplit{13107214837161535660241329225554910063065}\\
\end{longtable}
\endgroup

\clearpage

\begingroup
\small
\setlength{\tabcolsep}{5pt}
\renewcommand{\arraystretch}{1.05}
\begin{longtable}{@{}>{\raggedleft\arraybackslash}p{0.075\textwidth}>{\ttfamily\fontsize{8}{10}\selectfont\raggedright\arraybackslash}p{\dimexpr0.925\textwidth-2\tabcolsep\relax}@{}}
\caption{Ehrhart series of $\mathcal W_{10}$: $\Ehr_{\mathcal W_{10}}(z)=g_{10}(z)/(1-z)^{72}$ with $\deg_z g_{10}(z)=63$.}\label{tab:wc10}\\
\toprule $j$ & \multicolumn{1}{l}{$b_{10,j}$}\\\midrule\endfirsthead
\multicolumn{2}{l}{\small Table \thetable\ continued ($n=10$).}\\
\toprule $j$ & \multicolumn{1}{l}{$b_{10,j}$}\\\midrule\endhead
\midrule\multicolumn{2}{r}{\small Continued on the next page.}\\\endfoot
\bottomrule\endlastfoot
0, 63 & \seqsplit{1}\\
1, 62 & \seqsplit{1334889}\\
2, 61 & \seqsplit{535206884364}\\
3, 60 & \seqsplit{28876886666493276}\\
4, 59 & \seqsplit{344107119241648085400}\\
5, 58 & \seqsplit{1302633907536847863868128}\\
6, 57 & \seqsplit{2000616755057074114847528292}\\
7, 56 & \seqsplit{1475829111622183900133270258412}\\
8, 55 & \seqsplit{589916676350854205085786256430568}\\
9, 54 & \seqsplit{139636462560929573926157913690586990}\\
10, 53 & \seqsplit{20934596917433719342802221849463198481}\\
11, 52 & \seqsplit{2094224945528823519592202387816703244599}\\
12, 51 & \seqsplit{145671020122287495920523579618493190019744}\\
13, 50 & \seqsplit{7282983473205583087823904896052587472210096}\\
14, 49 & \seqsplit{268892263117669888150025370982438110559616430}\\
15, 48 & \seqsplit{7496926592348282964375796370659481179203937530}\\
16, 47 & \seqsplit{160813419256063354549027416238342664997938264160}\\
17, 46 & \seqsplit{2695950069897106925070930717288344447989761736380}\\
18, 45 & \seqsplit{35795580980215373850047742209990612480736271434610}\\
19, 44 & \seqsplit{380713566907256609547468073821996843119685867187980}\\
20, 43 & \seqsplit{3275145053154167512790901630174266323856297512662255}\\
21, 42 & \seqsplit{22979349657809035545464291447106931887067503821108925}\\
22, 41 & \seqsplit{132440049141829206738139193202393936552612584475629760}\\
23, 40 & \seqsplit{630858673405885676304913427906610880868422369544694240}\\
24, 39 & \seqsplit{2496581467009851104193392816593829207075570591550298980}\\
25, 38 & \seqsplit{8244945375870403285783479936631701156637555346850726888}\\
26, 37 & \seqsplit{22807577911803234487085819440808009643465312253223239602}\\
27, 36 & \seqsplit{53010637967898902661673489377002585357330664970483638292}\\
28, 35 & \seqsplit{103783285852130288283331698089972537151680338972631551848}\\
29, 34 & \seqsplit{171483773505699674589806699634874499006511150908933108940}\\
30, 33 & \seqsplit{239484119932406838939286545545633243195587711062210728949}\\
31, 32 & \seqsplit{282944441357126351563193768813627714528025427634409244311}\\
\end{longtable}
\endgroup

\clearpage

\begingroup
\small
\setlength{\tabcolsep}{5pt}
\renewcommand{\arraystretch}{1.05}
\begin{longtable}{@{}>{\raggedleft\arraybackslash}p{0.075\textwidth}>{\ttfamily\fontsize{8}{10}\selectfont\raggedright\arraybackslash}p{\dimexpr0.925\textwidth-2\tabcolsep\relax}@{}}
\caption{Ehrhart series of $\mathcal W_{11}$: $\Ehr_{\mathcal W_{11}}(z)=g_{11}(z)/(1-z)^{90}$ with $\deg_z g_{11}(z)=80$.}\label{tab:wc11}\\
\toprule $j$ & \multicolumn{1}{l}{$b_{11,j}$}\\\midrule\endfirsthead
\multicolumn{2}{l}{\small Table \thetable\ continued ($n=11$).}\\
\toprule $j$ & \multicolumn{1}{l}{$b_{11,j}$}\\\midrule\endhead
\midrule\multicolumn{2}{r}{\small Continued on the next page.}\\\endfoot
\bottomrule\endlastfoot
0, 80 & \seqsplit{1}\\
1, 79 & \seqsplit{14684480}\\
2, 78 & \seqsplit{61586749444905}\\
3, 77 & \seqsplit{26090433141795793770}\\
4, 76 & \seqsplit{1949896245985521637561890}\\
5, 75 & \seqsplit{38887554637493025533281581732}\\
6, 74 & \seqsplit{274436503967332864935056826776130}\\
7, 73 & \seqsplit{834861204051116539791552716647238040}\\
8, 72 & \seqsplit{1262468565874275814061849521185317974800}\\
9, 71 & \seqsplit{1055034049001730628332007639280470697326590}\\
10, 70 & \seqsplit{528220220567265489589490586391311782932919933}\\
11, 69 & \seqsplit{168722812019945291301663398313336771136039303640}\\
12, 68 & \seqsplit{36142153752964941224824475915989071127997115829550}\\
13, 67 & \seqsplit{5405282977197910902423355778290950556889023570622070}\\
14, 66 & \seqsplit{583303887944494651882651599116112229110016665482309145}\\
15, 65 & \seqsplit{46676823741787938452355979219141160917959419563398318924}\\
16, 64 & \seqsplit{2833834781963783363511641891091645486133705302632922352995}\\
17, 63 & \seqsplit{133083032074604559604340287117839089926377586695643969929960}\\
18, 62 & \seqsplit{4914973169426146305547121237115076142934306590762664470109465}\\
19, 61 & \seqsplit{144790133496718992729183963898742898381958489364050509826157850}\\
20, 60 & \seqsplit{3444406396738331940042203205261748448618077608990920460297152927}\\
21, 59 & \seqsplit{66879429632522158555831061193306393666091121818246595019973494540}\\
22, 58 & \seqsplit{1069879245233979553273989981121291380614464860405669910379850081260}\\
23, 57 & \seqsplit{14216931291161213471799362567564672157951957568025594179045791891560}\\
24, 56 & \seqsplit{158068390384168461407275178099958985150848615404595734114695885650960}\\
25, 55 & \seqsplit{1479869131854343720221412587281665870701498682700938794266267447170086}\\
26, 54 & \seqsplit{11732508966513353501037129922747065430335778896134680507066470093403965}\\
27, 53 & \seqsplit{79161884227224246508679052646403404345905368341688413538814635813598960}\\
28, 52 & \seqsplit{456581043511566566513756838750649299641150922784219665003799193406125100}\\
29, 51 & \seqsplit{2259917455015013347676040286687575994178632441330552600666858713130898160}\\
30, 50 & \seqsplit{9632450231336794527947449205326436851306868754765035458406149555236620420}\\
31, 49 & \seqsplit{35462233590117554296833075539562847955623515746945956607220659491897784880}\\
32, 48 & \seqsplit{113065755847885865404818291759672029697970400130807039565248284647947993030}\\
33, 47 & \seqsplit{312918069826522600854890638200206101721857265579850116975761027358610292700}\\
34, 46 & \seqsplit{753222026961378730490689274083776842139584094531888612894098822340268660325}\\
35, 45 & \seqsplit{1579556357009619711400936241296625724112698332853761904741610358853299545374}\\
36, 44 & \seqsplit{2889815889806322306439673258686027236373634776675525004624936294928758513525}\\
37, 43 & \seqsplit{4617572925631922908016027106398595888158183324096994801079581389319797290920}\\
38, 42 & \seqsplit{6449688740325452249773160861843840833404355457938341602310966703875348540240}\\
39, 41 & \seqsplit{7879686549589815648064051919266271930294598739569578564451735742954096193580}\\
40 & \seqsplit{8423286746547510694781471140447820128457705048915915535966580408775575080416}\\
\end{longtable}
\endgroup

\clearpage
\begingroup
\small
\setlength{\tabcolsep}{5pt}
\renewcommand{\arraystretch}{1.05}
\begin{longtable}{@{}>{\raggedleft\arraybackslash}p{0.075\textwidth}>{\ttfamily\fontsize{7.6}{10}\selectfont\raggedright\arraybackslash}p{\dimexpr0.925\textwidth-2\tabcolsep\relax}@{}}
\caption{Ehrhart series of $\mathcal W_{12}$: $\Ehr_{\mathcal W_{12}}(z)=g_{12}(z)/(1-z)^{110}$ with $\deg_z g_{12}(z)=99$.}\label{tab:wc12}\\
\toprule $j$ & \multicolumn{1}{l}{$b_{12,j}$}\\\midrule\endfirsthead
\multicolumn{2}{l}{\small Table \thetable\ continued ($n=12$).}\\
\toprule $j$ & \multicolumn{1}{l}{$b_{12,j}$}\\\midrule\endhead
\midrule\multicolumn{2}{r}{\small Continued on the next page.}\\\endfoot
\bottomrule\endlastfoot
0, 99 & \seqsplit{1}\\
1, 98 & \seqsplit{176214731}\\
2, 97 & \seqsplit{8473000154156285}\\
3, 96 & \seqsplit{30860312127211613449975}\\
4, 95 & \seqsplit{15785066810925440592158445015}\\
5, 94 & \seqsplit{1798916967828211452711661968310965}\\
6, 93 & \seqsplit{62744393132928999194280374661715931355}\\
7, 92 & \seqsplit{838123495894683741639035400727858854022473}\\
8, 91 & \seqsplit{5049269971218029592439995097223784763034360010}\\
9, 90 & \seqsplit{15510210231568851186483393251966217064742414104870}\\
10, 89 & \seqsplit{26693829368344514257522283383640088576094469930897366}\\
11, 88 & \seqsplit{27714066094550643160281468665049313679733999440443331246}\\
12, 87 & \seqsplit{18411402046626999621414405617335458299484925725072866613011}\\
13, 86 & \seqsplit{8209319014002253387784114737398847481236242693229378326118817}\\
14, 85 & \seqsplit{2554963651413043175322723627801719767547462190540159558089284775}\\
15, 84 & \seqsplit{573407800001862476417032639754666211032903406355330242380511022069}\\
16, 83 & \seqsplit{95372130711616597570235197661879350917928148455252676839898989814856}\\
17, 82 & \seqsplit{12031758520903761074401829050659749387286782425309052839970426917990596}\\
18, 81 & \seqsplit{1174353601787210245218868416385235914493139560440124405935370305529049424}\\
19, 80 & \seqsplit{90209720996799744561548436076757834981855934080426177172680567738879677964}\\
20, 79 & \seqsplit{5535210958596593437503665131096514613175938260081710205424740760691041998033}\\
21, 78 & \seqsplit{274831804727761113673078771458621890438269366276936549402909331398082473956971}\\
22, 77 & \seqsplit{11168435807511614890021825631162870460048912666538188215694628551744164385311261}\\
23, 76 & \seqsplit{375207201606298170521984273166343775781388563679801220694021440568008988594958871}\\
24, 75 & \seqsplit{10514050269768012074760522867654161953771708683835135726330677038747441645736498962}\\
25, 74 & \seqsplit{247702926006901951667257265523311311936950962128919336722648772899665467039310584030}\\
26, 73 & \seqsplit{4941167200790384169556309972131730218719521359341933503026079530846720591212264774186}\\
27, 72 & \seqsplit{83988738922827591899814515949841674190024172956991033553397415849335579388212697581826}\\
28, 71 & \seqsplit{1223437732957417158139368092451740586758092374711547274912626185667211947640612893316813}\\
29, 70 & \seqsplit{15351188854231886446352323521687617564262888233569773107143173891244374926724569463202943}\\
30, 69 & \seqsplit{166691537968319832207528349609456969861492113148459391659980833051955045845571013274806073}\\
31, 68 & \seqsplit{1572948491995967709644641163657083842849455733582834613726215238329558989925054962488708187}\\
32, 67 & \seqsplit{12947598150228888107375802098655017806837903085872673235422156144584689176842410436679037605}\\
33, 66 & \seqsplit{93288009611604185366652218389524822809062216124775217612890569677980245441345120906610463547}\\
34, 65 & \seqsplit{590162024100272358174916861449363673014045269500012563539315231285671100595041573554646708309}\\
35, 64 & \seqsplit{3287336039164925219570532745382660166461267049671245793610709338584828197672860746802306014351}\\
36, 63 & \seqsplit{16163885408291763532270430998374091663740533327878983488724186972462521359982325382634053579622}\\
37, 62 & \seqsplit{70318585131974503183900592911190435861432529331275136274858641416070583813205698831967229129610}\\
38, 61 & \seqsplit{271213859775401520836885909753242859616230482059987042683034391848508801789498833182382342887318}\\
39, 60 & \seqsplit{929124222297904024001272883349970937397945207602424049045659025641889722102831458169068905093466}\\
40, 59 & \seqsplit{2831867387916240132542449613034628941222639940574741181612679459479057549620046824848526826747200}\\
41, 58 & \seqsplit{7690349875395516080770866925756937817294256399945087098699812962689274533211206779138492146156400}\\
42, 57 & \seqsplit{18631790718536674374541884777221847299973672405629065258854592691243624282878708887306781829663684}\\
43, 56 & \seqsplit{40316990079673636590639956692448225179909794046535234279965560156505634326981189926189507520025516}\\
44, 55 & \seqsplit{77995251257185062228095897091514806509404671321240523424885498343568750340349808434513395086910966}\\
45, 54 & \seqsplit{135005375935547922186540950817761119014405924790055081175818691841538563447737787627872571101131882}\\
46, 53 & \seqsplit{209233017530279457387089455235931853727992984952295787875813264544251871517766438238505203690186694}\\
47, 52 & \seqsplit{290494629986038446778555859350445333879380307846853607929689490982859227571746175364205167797385146}\\
48, 51 & \seqsplit{361449638327453741869304128776491899974012165062697712897457443767246385632363329980351638717487760}\\
49, 50 & \seqsplit{403156845449265351112640821819662132768277398026233006513350971419536282277133486428458996489364484}\\
\end{longtable}
\endgroup

\end{document}